\newcommand{\N}{\mathbb{N}}
\newcommand{\R}{\mathbb{R}}
\newcommand{\ep}{\varepsilon}
\renewcommand{\leq}{\leqslant}
\newtheorem{theorem}{Theorem}[section]
\newtheorem{lemma}[theorem]{Lemma}
\newtheorem{proposition}[theorem]{Proposition}
\newtheorem{corollary}[theorem]{Corollary}
\theoremstyle{definition}
\newtheorem{definition}[theorem]{Definition}
\theoremstyle{remark}
\numberwithin{equation}{section}
\def\fnote#1{\footnote}
\def\natu{{\mathbb N}}
\def\real{{\mathbb R}}
\def\K{{\mathbb K}}
\def\ignora#1{}
\def\n3#1{\left\vert  \! \left\vert \! \left\vert \, #1 \, \right\vert \!
  \right\vert \! \right\vert }
\begin{document}

\keywords{Convex combinations of slices; weak topology; stable sets; $L_1$ preduals}

\subjclass[2010]{46B20; 46B22}

\title[Weak stable unit ball of $L_1$ preduals]{A characterization of the weak topology in the unit ball of purely atomic  $L_1$ preduals}

\author{Gin\'es L\'opez-P\'erez}\thanks{This research was supported by MICINN (Spain) Grant PGC2018-093794-B-I00 (MCIU, AEI, FEDER, UE) and by Junta de Andaluc\'ia Grant A-FQM-484-UGR18 (FEDER, UE)
and by Junta de Andaluc\'ia Grant FQM-0185.}
\address[G. L\'opez-P\'erez]{Universidad de Granada, Facultad de Ciencias.
Departamento de An\'{a}lisis Matem\'{a}tico, 18071-Granada
(Spain)} \email{ glopezp@ugr.es}
\urladdr{\url{https://wpd.ugr.es/local/glopezp}}

\author{ Rub\'en Medina}\thanks{The second author research has been supported by MIU (Spain) FPU19/04085 Grant.}
\address[R. Medina]{Universidad de Granada, Facultad de Ciencias.
Departamento de An\'{a}lisis Matem\'{a}tico, 18071-Granada
(Spain)} \email{rubenmedina@ugr.es}
\urladdr{\url{https://www.ugr.es/personal/ae3750ed9865e58ab7ad9e11e37f72f4}}

\maketitle 

\begin{abstract}  We study Banach spaces with a weak stable unit ball, that is Banach spaces where every convex combination of relatively weakly open subsets in its unit ball is again a relatively weakly open subset in its unit ball. It is proved that the class of $L_1$ preduals with a weak stable unit ball agree with those $L_1$ preduals which are purely atomic, that is preduals of $\ell_1(\Gamma)$ for some set $\Gamma$, getting in this way a complete geometrical characterization of purely atomic preduals of $L_1$, which answers a setting problem. As a consequence, we prove the equivalence for $L_1$ preduals of different properties previously studied by other authors, in terms of slices around weak stability. Also we get the weak stability of the unit ball of $C_0(K,X)$ whenever $K$ is a Hausdorff and scattered locally compact space and $X$ has a norm stable and weak stable unit ball, which gives the weak stability of the unit ball in $C_0(K,X)$ for finite-dimensional $X$ with a stable unit ball and $K$ as above. Finally we prove that Banach spaces with a weak stable unit ball satisfy a very strong new version of diameter two property.\end{abstract}

\section{Introduction}
A convex subset $C$ of a topological vector space $X$ is said to be stable if the midpoint  map $C\times C\rightarrow C$ given by $(x,y)\to \frac{x+y}{2}$ is open for the restricted topology on $C$. Then $C$ is stable if, and only if,  every convex combination of relatively open subsets in $C$ is again a relatively open subset in $C$ \cite{OB} (do not confuse with the concept of weakly stable Banach spaces in the Krivine-Maurey sense). In the past, the stability has been a relevant tool in different contexts. For example, in \cite{OB} it was proved, in the case $C$ is compact, that the openness of the above midpoint map is equivalent to the openness of the barycentre map (see \cite{Phelps}), which gives the continuity of the convex envelope for every continuous function on $C$ \cite{Ves}. Also, stability implies regularity (see \cite{Clau}), which gives solution to the abstract Dirichlet problem \cite{Clau1}.  Stability is a key to the study of extremal operators, since  for $K$ a compact and Hausdorff space, a norm one element $f\in C(K,X)$, the space of continuous functions on $K$ into a Banach space $X$ with stable unit ball, is an extreme point of the unit ball if, and only if, every value of $f$ is an extreme point in the unit ball of $X$ \cite{Gras}.  More recently, it is proved in \cite{Shiri} the stability of the convex set of positive trace-class operators on a separable Hilbert space with trace one, known as the set of quantum states or density operators, and this fact is the key to study the continuity of Von Neumann entropy, relevant in theoretical quantum information.

The prototype example of a stable compact convex set is a Bauer simplex, that is the positive face of the unit sphere in the dual of a $C(K)$ space or equivalently the set of Radon probability measures for its weak-star topology \cite{OB}. Examples of Banach spaces with a norm stable unit ball are strictly convex spaces or spaces with 3.2 intersection property \cite{Clau}. Also stable convex compact sets in finite-dimensional spaces are completely characterized. Indeed, a compact and convex subset $K$ in $\real^n$ is stable if, and only if, the $p$-skeleton $\{x\in K: {\rm dim (
 face}(x))\leq p\}$ is closed for $0\leq p\leq n-1$ (face(x) denotes the minimal face in $K$ containing $x$, and dim means the dimension of the affine span). Then $K$ is always stable if $n=2$,  $K$ is stable if, and only if, the set of extreme points of $K$ is closed, in the case $n=3$, and for $n>3$, there are compact and convex subsets $K$ with a closed set of extreme points, failing to be stable. As a consequence of the above, every unit ball of a finite-dimensional Banach space with a finite set of extreme points, or such that every point in the unit sphere is an extreme point, is stable \cite{Papa}.

As the above cited example, most of results about stability in the past, have been obtained in the compact case, or concern with norm stability. More recently, motivated by the fact that the positive face of the unit sphere of the space $L_1[0,1]$ is weak stable \cite[Remark IV.5]{ggms}, in relation with other geometrical properties in Banach spaces as diameter two properties  \cite{lo} (also octahedrality, Daugavet property, etc \cite{jfa}), the weak stability has attracted the attention of researchers in geometry of Banach spaces, and new examples of weak stable unit balls of Banach spaces have emerged. For example, in \cite{abralim} and \cite{ha} it is proved that the unit ball of $C_0(K)$, for a Hausdorff and locally compact space $K$, is weak stable if, and only if, $K$ is scattered (see also \cite{abralim2} and \cite{be}). As it can be seen in the above references, the current interest is to know how big is the class of Banach spaces with a weak stable unit ball, and the natural questions in the setting of the above papers is to know what preduals of $L_1$ or $C_0(K,X)$ spaces have a weak stable unit ball.

From now, we will say that a Banach space $X$ has a weak stable unit ball $B_X$, if it is stable for the inherited  weak topology (other names have been used to mention stability in recent papers, but it seems natural keeping the used notation in the past). In the case $B_X$ is stable for the norm topology, we will say that $X$ has a stable unit ball.

Observe that a Banach space with a weak stable unit ball satisfies the strong diameter two property \cite{lo}. Then $X^*$ is octahedral from \cite{jfa} and so the class of $L_1$ preduals is a natural class to looking for weak stable unit balls. Our main result in Section 2 proves that a $L_1$ predual $X$ is weak stable if, and only if, $X^*$ is purely atomic, that is, $X^*=\ell_1(\Gamma)$ for some set $\Gamma$, which gives a complete geometric characterization of purely atomic $L_1$ preduals. Taking into account the existence of preduals of $\ell_1$  which are not isomorphic to complemented subspaces of $C(K)$ spaces \cite{ben}, the above fact improves the aforementioned results around weak stability, known up to now. Our main result uses continuous selections of multivalued maps techniques in the setting of $L_1$ preduals. In Section 3, after introduce some easy consequences of Banach spaces with a stable unit ball, we prove using again multivalued maps techniques the weak stability of the unit ball of $C_0(K,X)$, whenever $K$ is a Hausdorff and scattered locally compact space and $X$ has a stable and weak stable unit ball, which improves the known results up to now. Observe for example that in \cite{abralim2}, the above result is obtained for finite-dimensional $X$ satisfying an hypothesis which we will prove to be equivalent to norm stability of the unit ball in $X$. We finish the paper by showing that Banach spaces with a weak stable unit ball satisfy a stronger new version among the known diameter two properties.
 
 We use standard terminology for Banach spaces. $S_X$ and $B_X$ denote the unit sphere and the closed unit ball, respectively, of a Banach space $X$. $w$ and $w^*$ denote the weak and weak-star topologies and $X^*$ is the topological dual of $X$. A slice in $B_X$ is the intersection of $B_X$ with an open semi-space. A face of a subset $C$ in a Banach space $X$ is a subset $F\subset C$ satisfying that $x,y\in F$ whenever $x,y\in C$ and $tx+(1-t)y\in F$ for some $t\in [0,1]$. An extreme point of $C$ is a point  $x\in C$ such that the singleton $\{x\}$ is a face of $C$. ${\rm co}(A)$ denotes the convex hull of $A$. A $L_1$ predual is a Banach space $X$ so that $X^*$ is linearly isometric to $L_1(\mu)$ for some measure space $(\Omega, \Sigma, \mu)$. If $\Gamma$ is a set, $\ell_1(\Gamma, \mathbb{K})$ is the classical Banach space of (absolutely) summable families of scalars in $\mathbb{K}=\real$ or $\mathbb{C}$.

Now we pass to introduce some notation and known results, which will be used in the following. For $X$ and $Y$ topological spaces, we denote by ${\mathcal P}(Y)$ the power set of $Y$. A map defined on $X$ with values into ${\mathcal P}(Y)$ will be called a multivalued map $\phi:X\rightarrow Y$ so that $\phi(x)$ is a nonempty subset of $Y$ for every $x\in X$. We recall that $\phi$ is called lower semicontinuous (l.s.c) or hemicontinuous at $x\in X$ if for every open subset $U$ of $Y$ such that  $\phi(x)\cap U\neq\emptyset$, there is an open subset $V$ of $X$ with $x\in V$ such that $\phi(z)\cap U\neq\emptyset$ whenever $z\in V$. $\phi$ is called lower semicontinuous if $\phi$ is lower semicontinuous at $x$, for every $x\in X$. Also, $\phi$ is lower semicontinuous if, and only if, the set $\phi^{\ell}(U):=\{x\in X:\phi(x)\cap U\neq \emptyset \}$ is an open subset of $X$ for every open subset $U$ of $Y$. Then a map $f:X\rightarrow Y$ between topological spaces $X$ and $Y$ is open if, and only if, the multivalued map $f^{-1}:Y\rightarrow X$ given by $f^{-1}(y)=\{x\in X:f(x)=y\}$ is l.s.c. A multivalued map $\phi:X\rightarrow Y$, with $X$ and $Y$  convex subsets of real or complex vector spaces, is called convex if $ t\phi(x)+(1-t)\phi(y) \subset \phi(tx+(1-t)y)$ for every $t\in [0,1]$ and $x,y\in X$. $\phi$ is called symmetric if $\phi(zx)=z\phi(x)$ for every $x\in X$ and $z$ a modulus one complex scalar. A continuous selection of a multivalued map $\phi:X\rightarrow Y$ will be a continuous map $f:X\rightarrow Y$ such that $f(x)\in \phi(x)$ for every $x\in X$. The well known Michael selection principle asserts that the above $\phi$ has a continuous selection whenever $X$ is a paracompact topological space, $Y$ is a Fr\'echet space, $\phi$ is l.s.c and $\phi(x)$ is a closed and convex subset of $Y$ for every $x\in X$. Finally, we recall a useful criteria for l.s.c. multivalued maps (see \cite[Chapter 17]{Ali} for background about the above topics).
We will use the above freely in the sequel.

\begin{lemma}\label{redes} (\cite[Theorem 17.21]{Ali}) For $X$ and $Y$ topological spaces, and $\phi:X\rightarrow Y$ a multivalued map, the following assertions are equivalent:\begin{enumerate}\item[i)] $\phi$ is l.s.c.\item[ii)] For every net $\{x_{\alpha}\}$ in $X$ converging to $x\in X$, and for every $y\in \phi(x)$, there is a subnet $\{x_{\alpha_{\lambda}}\}$ of $\{x_{\alpha}\}$ and a net $\{y_{\lambda}\}$ in $Y$ such that $y_{\lambda}\in\phi(x_{\alpha_{\lambda}})$ for every $\lambda$ and $\{y_{\lambda}\}$ converges to $y$.\end{enumerate} Moreover, net and subnet can be changed in ii) by sequence and subsequence, whenever $X$ and $Y$ are first countable.\end{lemma}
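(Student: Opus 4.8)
The plan is to prove the two implications \emph{i)} $\Rightarrow$ \emph{ii)} and \emph{ii)} $\Rightarrow$ \emph{i)} separately, working throughout with the reformulation of lower semicontinuity via the sets $\phi^{\ell}(U)=\{x\in X:\phi(x)\cap U\neq\emptyset\}$ recalled just before the statement. The implication \emph{ii)} $\Rightarrow$ \emph{i)} I expect to be routine and I would do it by contradiction: if $\phi$ is not l.s.c., then $\phi^{\ell}(U)$ fails to be open for some open $U\subseteq Y$, so there is a point $x\in\phi^{\ell}(U)$ which is not interior to $\phi^{\ell}(U)$. Using the neighborhood filter of $x$ (directed by reverse inclusion) as index set, I would extract a net $\{x_{\alpha}\}$ converging to $x$ with $\phi(x_{\alpha})\cap U=\emptyset$ for every $\alpha$. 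Choosing $y\in\phi(x)\cap U$ and applying \emph{ii)} produces a subnet $\{x_{\alpha_{\lambda}}\}$ and a net $y_{\lambda}\in\phi(x_{\alpha_{\lambda}})$ with $y_{\lambda}\to y$; since $U$ is open and $y\in U$, eventually $y_{\lambda}\in U$, which contradicts $\phi(x_{\alpha_{\lambda}})\cap U=\emptyset$.

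The real work, and the step I expect to be the main obstacle, is the subnet construction in \emph{i)} $\Rightarrow$ \emph{ii)}. Given a net $\{x_{\alpha}\}_{\alpha\in A}$ converging to $x$ and a point $y\in\phi(x)$, let $\mathcal{U}_{y}$ denote the neighborhoods of $y$ ordered by reverse inclusion, and consider the index set $D=\{(\alpha,U)\in A\times\mathcal{U}_{y}:\phi(x_{\alpha})\cap U\neq\emptyset\}$ equipped with the product order $(\alpha,U)\leq(\alpha',U')$ if and only if $\alpha\leq\alpha'$ and $U\supseteq U'$. The crux is checking that $D$ is directed: given $(\alpha_{1},U_{1}),(\alpha_{2},U_{2})\in D$, I would set $U=U_{1}\cap U_{2}\in\mathcal{U}_{y}$, invoke l.s.c.\ at $x$ (valid since $y\in\phi(x)\cap U$) to obtain an open $V\ni x$ with $\phi(z)\cap U\neq\emptyset$ for all $z\in V$, and then use $x_{\alpha}\to x$ together with directedness of $A$ to pick $\alpha_{3}\geq\alpha_{1},\alpha_{2}$ with $x_{\alpha_{3}}\in V$; the pair $(\alpha_{3},U)$ is the required common upper bound. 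The same l.s.c.\ argument shows that the projection $(\alpha,U)\mapsto\alpha$ is cofinal, so $\{x_{\alpha}\}_{(\alpha,U)\in D}$ is a genuine subnet.

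To finish this direction I would select, for each $(\alpha,U)\in D$, a point $y_{(\alpha,U)}\in\phi(x_{\alpha})\cap U$; then $y_{(\alpha,U)}\in\phi(x_{\alpha})$ by construction, and convergence $y_{(\alpha,U)}\to y$ is immediate from the reverse-inclusion order, since for any fixed neighborhood $U_{0}$ of $y$ one locates $(\alpha_{0},U_{0})\in D$ (again via l.s.c.) and every $(\alpha,U)\geq(\alpha_{0},U_{0})$ satisfies $y_{(\alpha,U)}\in U\subseteq U_{0}$. This delivers exactly the subnet and accompanying net demanded by \emph{ii)}.

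For the first-countable addendum I would simply replace $\mathcal{U}_{y}$ and the neighborhood filter of $x$ by countable decreasing bases. In \emph{i)} $\Rightarrow$ \emph{ii)} this lets me diagonalize along a strictly increasing sequence of indices (arranging the auxiliary open sets $V_{k}\ni x$ to be decreasing by intersection) and extract an honest subsequence $\{x_{n_{k}}\}$ with $y_{k}\in\phi(x_{n_{k}})\cap U_{k}$ and $y_{k}\to y$; in \emph{ii)} $\Rightarrow$ \emph{i)} the countable base $\{W_{n}\}$ at $x$ yields a sequence $x_{n}\in W_{n}$ with $\phi(x_{n})\cap U=\emptyset$ converging to $x$, and the contradiction runs verbatim with the sequence hypothesis in place of the net one.
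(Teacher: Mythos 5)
Your proposal is correct. The paper does not prove this lemma at all --- it simply cites \cite[Theorem 17.21]{Ali} --- and your argument (the directed set of pairs $(\alpha,U)$ with $\phi(x_\alpha)\cap U\neq\emptyset$ for i) $\Rightarrow$ ii), the contradiction via a net indexed by the neighborhood filter of $x$ for ii) $\Rightarrow$ i), and the diagonalization along countable decreasing bases for the first-countable addendum) is essentially the standard proof found in that reference, so there is nothing to flag.
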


\section{Purely atomic $L_1$-preduals}

As we said in the introduction, our goal is to enlarge the known class of Banach spaces with a weak stable unit ball. It is known that $C(K)$ spaces, with $K$ a scattered, Hausdorff and compact topological space, are weak stable \cite{abralim}, \cite{ha}. Also, other properties around weak stability has been considered by different authors (see \cite{abralim, abralim2, be, ha, lo}), like the weak relative openess of convex combination of slices in the unit ball, nonempty relative weak interior of convex combinations of slices  in the unit ball, nonempty relative  weak interior of convex combinations  of relative weak open subsets in the unit ball, or property ($\overline{P1}$). A Banach space $X$ is said to verify the property $\overline{\rm P1}$  if for every convex combination of slices in $B_X$, $C$, and for every $x\in C$ there is a weak open subset relative to $B_X$, $U$, so that $x\in U$ and $U\subset \overline{C}$. Clearly weak stability implies all the above properties. With the above in mind, it is natural trying to characterize the weak stability for the class of preduals of $L_1$, improving the above known results. This will be our first goal, and for that, our main tool will be the next well known result about continuous selections in the setting of real or complex $L_1$ preduals.
\begin{theorem}\label{lazar} (\cite[Theorem 2.2]{La1}, \cite{edu}) Let $X$ be a real or complex Banach space with $X^*$ isometric to $L_1(\mu)$ space and $E$ be a Fr\'echet space. Consider a convex, symmetric and $w^*-$l.s.c. multivalued map $\phi:B_{X^*}\rightarrow E$ such that $\phi(x^*)$ is a convex and closed subset of $E$ for every $x^*\in B_{X^*}$. Then $\phi$ admits an affine, symmetric and $w^*-$continuous selection $h:B_{X^*}\rightarrow E$. Moreover, if $F$ is a face of $B_{X^*}$ with $H:={\rm co}(F\cup -F)$ $w^*-$closed and $g:H\rightarrow E$ is an affine, symmetric and $w^*-$continuous selection of $\phi_{\mid H}$, then $h$ can be chosen so that $h_{\mid H}=g$.\end{theorem}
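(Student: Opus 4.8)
The plan is to combine the Michael selection principle with the simplex geometry that the hypothesis $X^* = L_1(\mu)$ imposes on $(B_{X^*}, w^*)$, upgrading a merely continuous selection into an affine and symmetric one. The key structural input is that, $X^*$ being an abstract $L$-space, the $w^*$-compact convex set $B_{X^*}$ behaves as a symmetric Choquet simplex: its extreme points come in pairs $\{\xi, -\xi\}$ (respectively circle orbits in the complex case), and every $x^* \in B_{X^*}$ is the barycenter of an essentially unique maximal representing measure $\mu_{x^*}$ supported on the closure of the extreme boundary $\partial_e B_{X^*}$, with the assignment $x^* \mapsto \mu_{x^*}$ affine. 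This unique barycentric calculus, compatible with the symmetry $B_{X^*} = -B_{X^*}$, is exactly what distinguishes $L_1$-preduals and what makes an \emph{affine} selection possible.

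First I would produce a continuous selection. The domain $B_{X^*}$ is $w^*$-compact, hence paracompact; $E$ is Fr\'echet; and the restriction of $\phi$ to the extreme boundary keeps its closed convex values and remains l.s.c.\ (Lemma~\ref{redes} characterizes this via nets). Thus Michael's principle yields a continuous selection $s$ on $\partial_e B_{X^*}$. I would then symmetrize: since $\phi(-\xi) = -\phi(\xi)$, the map $\tilde s(\xi) := \tfrac12\bigl(s(\xi) - s(-\xi)\bigr)$ still lands in $\phi(\xi)$ by convexity and closedness of the values, and now satisfies $\tilde s(-\xi) = -\tilde s(\xi)$; in the complex case one averages over the circle action in the same spirit.

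The core step is to pass from the continuous symmetric boundary selection to a \emph{globally affine} selection, and here the uniqueness of representing measures is decisive. One sets $h(x^*) := \int \tilde s \, d\mu_{x^*}$, the barycenter of the push-forward of $\mu_{x^*}$ under $\tilde s$. Affinity of $x^* \mapsto \mu_{x^*}$ gives affinity of $h$; symmetry is inherited from $\tilde s$; and $h(x^*) \in \phi(x^*)$ follows from convexity of the multivalued map $\phi$ together with closedness of $\phi(x^*)$, applied to the representation $x^* = \int \xi \, d\mu_{x^*}(\xi)$ (an integral form of the inclusion $\sum t_i \phi(\xi_i) \subset \phi(\sum t_i \xi_i)$). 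The scalar prototype of this passage is Edwards' separation theorem on simplexes, which is presumably the role of the second cited source.

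The main obstacle is that for a general (non-Bauer) simplex the extreme boundary need not be closed and the barycenter map $x^* \mapsto \mu_{x^*}$ need not be $w^*$-continuous, so the naive formula above does not immediately deliver a \emph{continuous} $h$. Overcoming this is the technical heart of the argument: one replaces the single formula by an inductive construction of affine continuous selections converging uniformly in the Fr\'echet metric of $E$, each correction staying within a shrinking tube around $\phi$, and passes to the limit. Finally, for the \emph{moreover} part I would run the same scheme relative to the prescribed data: since $F$ is a face and $H = \operatorname{co}(F \cup -F)$ is $w^*$-closed, the representing measures of points of $H$ are supported on $F \cup (-F)$, so by uniqueness of the barycentric representation and affinity, an affine symmetric selection agreeing with $g$ on the extreme points lying in $F \cup (-F)$ automatically reproduces $g$ on all of $H$; the relative (extension) version of Michael's principle supplies an initial boundary selection agreeing with $g$ there, and both the symmetrization and the affinization preserve this boundary condition.
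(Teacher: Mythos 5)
This theorem is not proved in the paper at all: it is imported verbatim from \cite[Theorem 2.2]{La1} (real case) and \cite{edu} (complex case), and the paper only comments on how the two sources fit together. So there is no in-paper argument to compare yours against; judged on its own merits, your sketch correctly names the expected ingredients (Michael's principle, symmetrization, a barycentric calculus, Edwards-type separation) but has genuine gaps at exactly the points where Lazar--Lindenstrauss and Olsen do the real work.

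The central gap is the structural claim your construction rests on: that $B_{X^*}$ is a ``symmetric Choquet simplex'' in which every point has an essentially unique maximal representing measure $\mu_{x^*}$ depending affinely on $x^*$. This is false. Take $X=\ell_\infty^2$, a $C(K)$ space and hence an $L_1$-predual: $B_{X^*}=B_{\ell_1^2}$ is a square, and its center is represented by the two distinct maximal measures $\tfrac12(\delta_{e_1}+\delta_{-e_1})$ and $\tfrac12(\delta_{e_2}+\delta_{-e_2})$. For $L_1$-preduals only a symmetrized uniqueness statement survives (boundary measures representing the same point agree against odd continuous functions), and that statement is itself a nontrivial theorem which your sketch never establishes; without it, $h(x^*):=\int \tilde s\, d\mu_{x^*}$ is not even well defined. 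Second, even granting a canonical $\mu_{x^*}$, the inclusion $h(x^*)\in\phi(x^*)$ does not follow as you claim: convexity of $\phi$ handles finitely supported measures only, and the passage to general measures needs either a closed-graph/upper-semicontinuity property (which l.s.c.\ maps do not have) or discrete approximants with the \emph{same} barycenter supported inside the domain of $\tilde s$ (which the standard density theorem does not provide, since barycenters of restricted measures leave the extreme boundary). Third, and you concede this yourself, the barycentric formula cannot yield $w^*$-continuity of $h$, and you defer to ``an inductive construction of affine continuous selections converging uniformly''; but that inductive construction---carried out in \cite{La1}, and via Edwards' separation for complex Lindenstrauss spaces in \cite{edu}---\emph{is} the theorem, so deferring it leaves the proof empty. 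Finally, a smaller but real problem: Michael's principle requires a paracompact domain, and the extreme boundary of $B_{X^*}$ in the $w^*$ topology is in general neither closed, nor compact, nor metrizable, so even your initial boundary selection $s$ is unjustified; the same unproven barycentric calculus is also what your ``automatic'' agreement with $g$ on $H$ in the moreover part relies on.
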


The above result was proved in \cite[Theorem 2.2]{La1} in the real case. The complex case appears in \cite{edu}, assumming that $H$ is the intersection of $B_{X^*}$ with a $L$-ideal. This intersection agree with the concept of biface in \cite{La1} in the case of real preduals of $L_1$, as said in \cite{edu} (see also \cite[Pag. 168]{Alfsen}).  

Before showing our main result we need a simple lemma.

%\begin{lemma}\label{krein-smulian} Let $X$ be a Banach space and $f:B_{X^*}\rightarrow \mathbb{K}$. If f is affine, symmetric and $w^*-$continuous, then there is $\widetilde{f}\in X^{**}$ $w^*-$continuous such that $\widetilde{f}_{\mid B_{X^*}}=f$.\end{lemma}

%\begin{proof} It is enough define $\widetilde{f}(x^*)=\Vert x^*\Vert f(\frac{x^*}{\Vert x^*\Vert})$ for $0\neq x^*\in X^*$, $\widetilde{f}(0)=0$ and apply the Krein-Smulian theorem \cite{krein}. \end{proof}

\begin{lemma}\label{simplex} Let $\Gamma$ a nonempty set and $E:=\{e_{\gamma}:\gamma\in \Gamma\}$ the set of basis vectors in $\ell_1(\Gamma, \mathbb{K})$. Then the linear span of $E$ is norm dense in $\ell_1(\Gamma, \mathbb{K})$,  and for $C$ a finite subset of $E$, one has that ${\rm co}(C)$ is a $w^*-$closed face of $B_{\ell_1(\Gamma, \mathbb{K})}$ and ${\rm co}(C)$ is a simplex, that is, every element in ${\rm co}(C)$ has a unique expression as a convex combination of elements in $C$.\end{lemma}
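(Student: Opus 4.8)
The plan is to prove three separate assertions about $\ell_1(\Gamma,\mathbb{K})$ and its basis vectors.

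First, for density of $\operatorname{span} E$: given any $x = (x_\gamma)_{\gamma\in\Gamma} \in \ell_1(\Gamma,\mathbb{K})$ and $\ep > 0$, summability of $\sum_\gamma |x_\gamma|$ means only countably many coordinates are nonzero and the tail is small, so I would pick a finite set $F\subset\Gamma$ with $\sum_{\gamma\notin F}|x_\gamma| < \ep$ and observe that $\sum_{\gamma\in F} x_\gamma e_\gamma$ is a finite linear combination of elements of $E$ within $\ep$ of $x$ in norm. This is routine.

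Second, to see that $\operatorname{co}(C)$ is a $w^*$-closed face of $B_{\ell_1(\Gamma,\mathbb{K})}$ for finite $C = \{e_{\gamma_1},\dots,e_{\gamma_n}\}$, I would use the identification $\ell_1(\Gamma,\mathbb{K}) = c_0(\Gamma,\mathbb{K})^*$ so that the $w^*$-topology is the topology of coordinatewise convergence. The set $\operatorname{co}(C)$ consists exactly of the vectors supported on $\{\gamma_1,\dots,\gamma_n\}$ with nonnegative coordinates summing to one; I would describe it by the closed coordinate conditions $x_{\gamma} = 0$ for $\gamma\notin C$, $x_{\gamma_i}\geq 0$, and $\sum_i x_{\gamma_i} = 1$, each of which defines a $w^*$-closed set since the coordinate functionals are $w^*$-continuous, giving $w^*$-closedness of the finite intersection. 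For the face property I would take $x,y\in B_{\ell_1}$ and $t\in(0,1)$ with $tx+(1-t)y\in\operatorname{co}(C)$, and exploit that $\operatorname{co}(C)$ lies in $S_{\ell_1(\Gamma,\mathbb{K})}$: since $\|tx+(1-t)y\| = 1 \leq t\|x\|+(1-t)\|y\|\leq 1$, equality in the triangle inequality forces $\|x\|=\|y\|=1$, and then the coordinatewise argument (the total mass is concentrated on the support of $\operatorname{co}(C)$ with the right signs) forces $x,y\in\operatorname{co}(C)$.

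Third, for the simplex property I would argue that the points of $C$ are affinely independent, so the convex-combination representation is unique. Concretely, if $\sum_i \lambda_i e_{\gamma_i} = \sum_i \mu_i e_{\gamma_i}$ with $\lambda_i,\mu_i\geq 0$ summing to one, then comparing the $\gamma_i$-th coordinate immediately gives $\lambda_i = \mu_i$ for each $i$, because the basis vectors have disjoint supports. This disjointness of supports is precisely what makes the argument trivial.

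I expect no serious obstacle here; the one point needing slight care is phrasing the face argument so that the strict convexity-type reasoning is clean in the complex case as well as the real one, but since $\|\cdot\|_1$ satisfies $\|x\|=\|y\|=\|(x+y)/2\|=1$ together with coordinate-support concentration, the conclusion follows uniformly over $\mathbb{K}=\real$ or $\mathbb{C}$ from the triangle-inequality equality condition.
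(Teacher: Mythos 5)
Your proof is correct, and at bottom it runs on the same engine as the paper's: coordinatewise exploitation of the additivity of the $\ell_1$-norm. Still, the two write-ups are organized differently, and yours is in some respects more complete. The paper verifies the face property only for midpoints, writing $x=\frac{a+b}{2}$ with all coefficients $\lambda_i$ strictly positive: it first forces $a,b$ to vanish off the support of $x$ (using $b(\gamma)=-a(\gamma)$ there), declares the real case done, and handles $\mathbb{K}=\mathbb{C}$ by a separate strict-inequality argument on real parts; the density, the $w^*$-closedness and the simplex statement are dismissed as straightforward. You instead check the definition of face as stated, for an arbitrary $t\in(0,1)$, and your equality-case-of-the-triangle-inequality argument treats $\mathbb{K}=\mathbb{R}$ and $\mathbb{K}=\mathbb{C}$ uniformly; you also make the $w^*$-closedness explicit via $w^*$-continuity of the coordinate functionals (alternatively: $\operatorname{co}(C)$ is the convex hull of finitely many points, hence norm-compact, hence $w^*$-compact and $w^*$-closed). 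The one place where you should add detail is the last step of the face argument, which you state only in words: after support concentration, combine $\sum_i\bigl(t|x(\gamma_i)|+(1-t)|y(\gamma_i)|\bigr)=1$ with $\sum_i\operatorname{Re}\bigl(tx(\gamma_i)+(1-t)y(\gamma_i)\bigr)=\sum_i\lambda_i=1$ to conclude $|x(\gamma_i)|=\operatorname{Re}x(\gamma_i)$ for every $i$, i.e.\ $x(\gamma_i)\ge 0$, and then $\sum_i x(\gamma_i)=\|x\|=1$ puts $x$ (and likewise $y$) in $\operatorname{co}(C)$. With that line written out, your proof is complete and, unlike the paper's, covers the general-$t$ case of the face definition without any case-splitting over the scalar field.
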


\begin{proof} Lets take $G=\{\gamma_1,\dots,\gamma_k\}\subset \Gamma$ and define $C=\{e_\gamma,\;\gamma\in G\}\subset E$. In order to show that ${\rm co}(C)$ is a face of $B_{\ell_1(\Gamma, \mathbb{K})}$, let $\lambda_1,\dots,\lambda_k\in(0,1)$ be such that $\sum\limits_{i=1}^k\lambda_i=1$ and take the point $x=\sum\limits_{i=1}^k\lambda_ie_{\gamma_i}$. Given $a,b\in B_{\ell_1(\Gamma,\mathbb{K})}$ such that $\frac{a+b}{2}=x$ then we state that $a,b\in co(C)$:

First of all, if we fix $\gamma\in \Gamma\setminus G$, then $b(\gamma)=-a(\gamma)$ so we have
$$\begin{aligned}2\ge||a||+||b||&\ge2|a(\gamma)|+\sum\limits_{i=1}^k|a(\gamma_i)|+|b(\gamma_i)|\ge 2|a(\gamma)|+\sum\limits_{i=1}^k|a(\gamma_i)+b(\gamma_i)|\\&=2|a(\gamma)|+2\sum\limits_{i=1}^k\lambda_i=2|a(\gamma)|+2,\end{aligned}$$
which arises that $a(\gamma)=b(\gamma)=0$ for every $\gamma\in \Gamma\setminus G$. The real case is now done. Lets consider  $\mathbb{K}=\mathbb{C}$ for the rest of the proof.
If we suppose that there exists $j\in\{1,\dots,k\}$ such that $a(\gamma_j)\notin\R$, then 
$$\begin{aligned}||a||+||b||&=\sum\limits_{i=1}^k|a(\gamma_i)|+|b(\gamma_i)|>\sum\limits_{i=1}^k|Re(a(\gamma_i))|+|Re(b(\gamma_i))|\\&\ge\sum\limits_{i=1}^k|Re(a(\gamma_i)+b(\gamma_i))|=2,\end{aligned}$$
which is a contradiction with the fact that $a,b\in B_{\ell_1(\Gamma,\mathbb{K})}$.\\
The rest of the proof is straightforward.
\end{proof}

Now we pass to get the characterization of weak stable $L_1$ predual unit balls.

\begin{theorem}\label{predual} Let $X$ be a real or complex isometric predual of a $L_1(\mu)$ space for some measure $\mu$. Then $B_X$ is weak stable if, and only if, $\mu$ is purely atomic.\end{theorem}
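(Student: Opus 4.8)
The plan is to treat the two implications separately, with the atomic (sufficiency) direction carrying the technical weight and resting entirely on the selection theorem (Theorem \ref{lazar}). For the forward implication, assume $X^*=\ell_1(\Gamma,\mathbb{K})$. I reformulate weak stability of $B_X$ as lower semicontinuity of the inverse of the midpoint map $m\colon B_X\times B_X\to B_X$, $m(x,y)=\tfrac{x+y}{2}$, and invoke the net criterion of Lemma \ref{redes}: given a weakly convergent net $z_\alpha\to z$ in $B_X$ and a decomposition $z=\tfrac{x+y}{2}$, I must produce, along a subnet, $x_\lambda\to x$ and $y_\lambda\to y$ weakly with $\tfrac{x_\lambda+y_\lambda}{2}=z_{\alpha_\lambda}$. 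Setting $y_\alpha:=2z_\alpha-x_\alpha$, the task reduces to finding $x_\alpha\in B_X$ with $2z_\alpha-x_\alpha\in B_X$ and $x_\alpha\to x$ weakly. Identifying $X$ with the affine, symmetric, $w^*$-continuous scalar functions on $B_{X^*}$ and evaluating at the extreme points $e_\gamma$, the two norm constraints become the pointwise \emph{tube} condition $x_\alpha(e_\gamma)\in I_\alpha(\gamma):=[\,2z_\alpha(\gamma)-1,\,2z_\alpha(\gamma)+1\,]\cap[-1,1]$ (an analogous convex lens in the complex case), a set that always contains $z_\alpha(\gamma)$ and whose limit contains the admissible value $x(\gamma)$. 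A useful preliminary reduction is that, since $|x_\alpha(\gamma)-x(\gamma)|\le 2$ uniformly, splitting any $u\in\ell_1(\Gamma)$ into a finite part and a small tail shows that mere pointwise convergence $x_\alpha(\gamma)\to x(\gamma)$ already forces $x_\alpha\to x$ weakly.

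The heart of this direction is the construction of $x_\alpha$. I apply Theorem \ref{lazar} to the multivalued map $\phi_\alpha\colon B_{X^*}\to\mathbb{K}$ whose value at $e_\gamma$ is $I_\alpha(\gamma)$ and which is extended to all of $B_{X^*}$ as the natural symmetric convex envelope of this tube; by construction $\phi_\alpha$ is nonempty-valued (it contains $z_\alpha$), closed-convex-valued and symmetric, and any affine symmetric $w^*$-continuous selection is automatically an element $x_\alpha\in B_X$ with $2z_\alpha-x_\alpha\in B_X$. To steer $x_\alpha$ toward $x$ I use the ``moreover'' clause: for finite $G=\{\gamma_1,\dots,\gamma_k\}\subseteq\Gamma$ the set $F=\co\{e_{\gamma_i}\}$ is a $w^*$-closed face \emph{and a simplex} (Lemma \ref{simplex}), so $H=\co(F\cup -F)$ carries a well-defined affine symmetric $w^*$-continuous selection $g$ of $\restr{\phi_\alpha}{H}$ prescribed freely by the vertex values $g(e_{\gamma_i})=t_\alpha(\gamma_i)$, where $t_\alpha(\gamma_i)$ is the point of $I_\alpha(\gamma_i)$ nearest to $x(\gamma_i)$ — it is precisely the simplex property that makes these prescribed values consistent. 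Theorem \ref{lazar} extends $g$ to $x_\alpha$ with $x_\alpha(\gamma_i)=t_\alpha(\gamma_i)$. Running $\alpha$ together with $G$ over the product directed set and passing to the associated subnet, for each fixed $\gamma$ one eventually has $\gamma\in G$, so $x_{(\alpha,G)}(\gamma)=t_\alpha(\gamma)\to x(\gamma)$ (since $I_\alpha(\gamma)\to I(\gamma)\ni x(\gamma)$ and nearest-point projection onto a moving interval is continuous); this gives pointwise, hence weak, convergence and completes the lift.

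For the converse I argue by contraposition. If $\mu$ is not purely atomic then $L_1(\mu)$ carries an $L$-summand isometric to $L_1(\nu)$ with $\nu$ nonatomic, corresponding to an $M$-ideal of $X$ and hence to an $L_1$-predual quotient with nonatomic dual; after this reduction the goal is to exhibit a net $z_\alpha\to z$ in $B_X$ and a midpoint decomposition $z=\tfrac{x+y}{2}$ admitting no lift — equivalently, a tube correspondence of the above type possessing no affine $w^*$-continuous selection through the required value. The structural reason such an obstruction must exist is that in the nonatomic case $B_{X^*}$ has no extreme points and its finite faces fail to be simplices, so the very consistency mechanism of Lemma \ref{simplex} that drove the positive construction collapses.

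I expect the two genuinely delicate points to be, for sufficiency, verifying the $w^*$-lower semicontinuity of $\phi_\alpha$ together with the fact that the envelope really takes the value $I_\alpha(\gamma)$ at each $e_\gamma$ (so that selections land in $B_X$ rather than a larger set), and, for necessity, producing the explicit non-liftable configuration and justifying that weak stability is inherited by the nonatomic quotient. Of these I anticipate the converse to be the main obstacle: the sufficiency direction ultimately reduces to a single clean application of Theorem \ref{lazar} to the tube correspondence, whereas ruling out \emph{all} possible continuous liftings in the nonatomic setting requires extracting a quantitative failure from the absence of a rigid extreme-point skeleton.
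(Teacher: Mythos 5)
Your sufficiency direction (purely atomic $\Rightarrow$ weak stable) is essentially correct and, at its core, coincides with the paper's proof: both arguments rest on Theorem \ref{lazar} together with Lemma \ref{simplex}, applied to the fibers of the midpoint map, with the ``moreover'' clause used to prescribe the selection on the $w^*$-closed set $H$ generated by finitely many basis vectors. The differences are packaging: the paper fixes a basic weak neighborhood $U$ of $x$ and proves directly that $U\subset\frac{U_1+U_2}{2}$, prescribing boundary data by the explicit affine formula $g_i(x^*)=x^*(s_i+y-x)$, while you verify lower semicontinuity of the inverse midpoint map via the net criterion of Lemma \ref{redes}, steering with nearest-point projections onto the tubes $I_\alpha(\gamma)$ and a product-directed-set diagonalization; both routes work. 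One point in your write-up should be made precise: instead of an unspecified ``symmetric convex envelope,'' define $\phi_\alpha(x^*)=\left\{t\in B_{\mathbb{K}}:\ \vert 2x^*(z_\alpha)-t\vert\leq 1\right\}$; then convexity, symmetry, closedness of the values and $w^*$-lower semicontinuity (writing $\phi_\alpha=\pi_1\circ f^{-1}\circ\widehat{z_\alpha}$ with $f$ the midpoint map of $B_{\mathbb{K}}$) are checked exactly as in the paper, one has $\phi_\alpha(e_\gamma)=I_\alpha(\gamma)$ by inspection, and a selection automatically linearizes to $x_\alpha\in B_X$ with $2z_\alpha-x_\alpha\in B_X$ because the norm of the linearization is computed against the $e_\gamma$ only.

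The necessity direction (weak stable $\Rightarrow$ purely atomic), however, is a genuine gap: what you give is a plan, not a proof, and it has two missing steps. First, your reduction needs weak stability of $B_X$ to pass to the quotient $X/J$ by the $M$-ideal $J$ whose annihilator is the nonatomic $L$-summand of $X^*$; this is unjustified and not obvious. A relatively weakly open subset of $B_{X/J}$ pulls back to one of $B_X$, and proximinality of $M$-ideals gives $q(B_X)=B_{X/J}$, but the image under $q$ of a relatively weakly open subset of $B_X$ need not be relatively weakly open in $B_{X/J}$ (the defining functionals need not factor through $q$), so stability does not transfer by the naive argument. Second, and more seriously, the claim that an obstruction ``must exist'' because $B_{X^*}$ has no extreme points and the simplex mechanism of Lemma \ref{simplex} collapses is only a heuristic: failure of the method that proved sufficiency does not prove failure of weak stability; one must actually exhibit a convex combination of relatively weakly open sets (or of slices) that is not relatively weakly open, or has empty relative weak interior. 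The paper avoids any such construction by quoting a known result: weak stability implies that every convex combination of slices of $B_X$ has nonempty relative weak interior, and \cite[Theorem 4.7]{lo} asserts that for an $L_1(\mu)$ predual this already forces $\mu$ to be purely atomic (with a remark reducing to localizable measures and adapting to the complex case). Without that citation, or an explicit nonatomic obstruction, your converse remains unproved.
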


\begin{proof} If $X$ is a real Banach space with a weak stable unit ball, in particular every convex combination of slices in $B_X$ has nonempty relative weak interior in $B_X$. Then  we deduce that $\mu$ is purely atomic, from \cite[Theorem 4.7]{lo} (note that  \cite[Theorem 4.7]{lo} is deduced from \cite[Theorem 4.1]{lo} and a localizable measure space is used, however $L_1(\mu)$ is linearly isometric to a $\ell_1-$sum of spaces $L_1(\mu_i)$ with $\mu_i$ localizable from \cite[Pag. 136]{Lacey}, see also \cite[Pag. 501]{Defant}). The above is valid in the real case, but it can be easily adapted too for the complex case. 

%Anyway, if $X$ is a weak stable complex Banach space, we can get from \cite{be} that $X$ does not contain $\ell_1$-copies. Then we deduce that $\mu$ is purely atomic, from \cite{hay}.

Assume now that $\mu$ is purely atomic, so $X^*=\ell_1(\Gamma)$ for some set $\Gamma$. Pick $O_1$, $O_2$ relatively weak open subsets of $B_X$ and $x\in\frac{O_1+O_2}{2}$. Put $x=\frac{z_1+z_2}{2}$, with $z_i\in O_i$, $i=1,2$. Then there is $1>\rho>0$ such that $s_i:=(1-\rho) z_i+\rho x
\in O_i$, $i=1,2$. Now \begin{equation}\label{rho}
\vert x^*(s_i)\vert\leq 1-\rho+\rho\vert x^*(x)\vert\ \forall x^*\in B_{X^*}.
\end{equation}
As the subspace generated by the set $E$ of basis vectors of $B_{X^*}$ is dense in $X^*$ for the norm topology from Lemma \ref{simplex}, we can choose  weak neighborhoods $U_i$ of $s_i$, relative to $B_X$, satisfying $U_i\subset O_i$, given by $$U_i=\{z\in B_X: |x_{ij}^*(z-s_i)|<\varepsilon,\ j\in{1,\dots,k_i}\},$$
where $\varepsilon>0$, $k_1,k_2\in \natu$ and $x_{ij}^*\in E$ for $i=1,2$ and $1\leq j\leq k_i$. Now, our goal will be to prove that $\frac{U_1+U_2}{2}$ is a weak neighborhood of $x$ relative to $B_X$. In order to do that, define $C:=\{x_{ij}^*\}_{\substack{i\in\{1,2\}\\j\in\{1,\dots,k_i\}}}$ and $\delta:=\min\limits_{\substack{x^*\in C\\|x^*(x)|<1}}\{1-|x^*(x)|\}>0$, in the case  $A_x^C:=\{x^*\in C: |x^*(x)|<1\}\neq \emptyset$, otherwise we put $\delta:=1$ (observe that $\vert x^*(x)\vert \leq 1-\delta$ whenever $x^*\in A_x^C$). Also we consider $$U:=\{z\in B_X\:|x^*(z-x)|<\mu\ \forall x^*\in C\},$$ for  $0<\mu<\min\{\rho\delta,\varepsilon\}$. As $U$ is a neighborhood of $x$ relative to $B_X$, it is enough to show that $U\subset \frac{U_1+U_2}{2}$. In order to prove that, fix $y\in U$ and put $F={\rm co}(C)$. As $C$ is a finite subset  of points in $E$, we have that $F$ is a $w^*-$closed face of $B_{X^*}$ from Lemma \ref{simplex}, and so $H:={\rm co}(S_{\mathbb{K}}F)$ is $w^*-$closed (in the complex case, even in the real one,  we have that $H$ agree with the intersection of an $L$- ideal, the linear span of $C$, with $B_{X^*})$. In order to apply Theorem \ref{lazar}, we define $g_1,g_2:C\rightarrow \K$ by 
$$g_i(x^*)=\begin{cases}x^*(s_i+y-x)\;\;\;&\text{ if }\; x^*\in A_x^C\ \text{and}\ \text{so}\ |x^*(x)|\le1-\delta\\
x^*(y)&\text{ if }\;|x^*(x)|=1\end{cases}$$
It is clear that $g_1$ and $g_2$ admit unique affine and symmetric extensions to $H$ from Lemma \ref{simplex}, which we will call again $g_1,g_2:H\rightarrow \mathbb{K}$. In order to prove that $g_i(x^*)\in B_{\mathbb{K}}$ for $i=1,2$, $x^*\in H$, it is enough to see that $g_i(x^*)\in B_{\mathbb{K}}$ for every $x^*\in C$. Indeed, in the case $x^*\in C$ with $|x^*(x)|\le 1-\delta$ we have, applying \eqref{rho} and taking into account that $y\in U$ with $\mu<\rho\delta$, $$\begin{aligned}|g_i(x^*)|=&|x^*(s_i+y-x)|\le |x^*(s_i)|+|x^*(y-x)|\\\le& 1-\rho+\rho|x^*(x)|+\mu<1-\rho(1-|x^*(x)|-\delta)\le1,\end{aligned}$$
while in the case $x^*\in C$ with $|x^*(x)|=1$, we have $|g_i(x^*)|=|x^*(y)|\le 1$.

Now we define the multivalued map $\phi:B_{X^*}\rightarrow \mathbb{K}\times\mathbb{K}$ by $$\phi(x^*)=\left\{(r_1,r_2)\in B_{\mathbb{K}}\times B_\mathbb{K}:\frac{r_1+r_2}{2}=x^*(y)\right\}\ \forall x^*\in B_{X^*}.$$
It is easy to see that $\phi$ is an convex and symmetric multivalued map, with $\phi(x^*)$ a closed and convex subset of $\mathbb{K}\times\mathbb{K}$, for every $x^*\in B_{X^*}$. The fact that $\phi$ is a $w^*$-l.s.c. multivalued map is consequence that $\phi=f^{-1}\circ \hat{y}$, where $f:B_{\mathbb{K}}\times B_{\mathbb{K}}\rightarrow B_{\mathbb{K}}$ is the midpoint map, and $\hat{y}$ is the natural injection of $y$ in $X^{**}$ (observe that $\hat{y}$ is $w^*$-continuous as an element of $X^{**}$ and $f^{-1}$ is a l.s.c multivalued map since $B_{\mathbb{K}}$ is stable, that is since $f$ is open).

We will check now that $g:=(g_1,g_2)$ is a symmetric, affine and $w^*$-continuous selection map of $\phi_{\mid H}$. It is clear that $g$ is symmetric, affine and $w^*$-continuous. Since $\phi$ is symmetric and convex, it is enough to show that $g(x^*)\in \phi(x^*)$ for every  $x^*\in C$ in order to prove that $g(x^*)\in \phi(x^*)$ for every $x^*\in H$. In fact, in the case $x^*\in C$ with $|x^*(x)|\le 1-\delta$ we have $$\begin{aligned}\frac{g_1(x^*)+g_2(x^*)}{2}=&\frac{x^*(s_1+y-x+s_2+y-x)}{2}\\=&x^*\left(\frac{s_1+s_2}{2}\right)+x^*(y-x)=x^*(x)+x^*(y-x)=x^*(y),\end{aligned}$$ while in the case $x^*\in C$ with $|x^*(x)|=1$ we have $$\frac{g_1(x^*)+g_2(x^*)}{2}=\frac{x^*(y)+x^*(y)}{2}=x^*(y).$$
Then we have proved that $g$ is a symmetric, affine and $w^*$-continuous selection map of $\phi_{\mid H}$. We apply now Theorem \ref{lazar} to find out  $h:=(h_1,h_2)$, a symmetric, affine and $w^*$-continuous selection map of $\phi$, satisfying that $h_{\mid H}=g$. Define, for $i=1,2$ $$x_i:X^*\rightarrow \mathbb{K}$$
by
$$x_i(x^*)=\begin{cases}0\;\;&\text{ if }\;\;x^*=0\\
||x^*||h_i\left(\frac{x^*}{||x^*||}\right)\;\;&\text{ if }\;\;x^*\in X^*\setminus\{0\}\end{cases}$$
From Krein-Smulian Theorem, we can assume that $x_1, x_2$ are $w^*$-continuous linear functionals on $X^*$ such that $x_i(B_{X^*})\subset B_{\mathbb{K}}$. So we assume that $x_1, x_2\in B_X$ and for $0\neq x^*$, as $(h_1,h_2)$ is a selection of $\phi$, we have that
$$\begin{aligned}x^*\left(\frac{x_1+x_2}{2}\right)=&\frac{x^*(x_1)+x^*(x_2)}{2}=\frac{||x^*||h_1\left(\frac{x^*}{||x^*||}\right)+||x^*||h_2\left(\frac{x^*}{||x^*||}\right)}{2}\\=&||x^*||\left(\frac{h_1+h_2}{2}\right)\left(\frac{x^*}{||x^*||}\right)=||x^*||\left(\frac{x^*}{||x^*||}\right)(y)=x^*(y).\end{aligned}$$
So we deduce that $\frac{x_1+x_2}{2}=y$. 

Finally, we show that $x_i\in U_i$ for $i\in\{1,2\}$. In the case $x^*\in C$ with $|x^*(x)|\le1-\delta$, one has
$$\begin{aligned}|x^*(x_i-s_i)|=&\left|||x^*||h_i\left(\frac{x^*}{||x^*||}\right)-x^*(s_i)\right|=|g_i(x^*)-x^*(s_i)|\\=&|x^*(s_i+y-x)-x^*(s_i)|=|x^*(y-x)|<\mu<\varepsilon,\end{aligned}$$
while in the case $x^*\in C$ with $|x^*(x)|=1$ we deduce from $\frac{x^*(s_1)+x^*(s_2)}{2}=x^*(x)$ and $\vert x^*(s_i)\vert\leq 1$, that $x^*(x)=x^*(s_1)=x^*(s_2)$. Then $$\begin{aligned}|x^*(x_i-s_i)|=&\left|||x^*||h_i\left(\frac{x^*}{||x^*||}\right)-x^*(s_i)\right|=|g_i(x^*)-x^*(s_i)|\\=&|x^*(y)-x^*(s_i)|= |x^*(y-x)|<\mu<\varepsilon.\end{aligned}$$ So $x_i\in U_i$ for $i\in\{1,2\}$ with $\frac{x_1+x_2}{2}=y$ and we are done \end{proof}

It is worth to mention that inside the class of isometric preduals of $\ell_1$, there are Banach spaces not isomorphic to complemented subspaces of any $C(K)$ space \cite[Theorem 2.13]{ben}, so that Theorem \ref{predual} definitively enlarges the class of spaces with a weak stable unit ball far away, even isomorphically speaking, to $C(K)$ spaces known up to now, in a natural way. Also, as a consequence of the above result, we get the equivalence between different considered properties up to now by different authors, around weak stability, in the natural setting of $L_1$ preduals.

\begin{corollary}\label{equivalence} Let $X$ be an isometric predual of some $L_1(\mu)$ space. Then the following assertions are equivalent:\begin{enumerate} \item[i)] $B_X$ is weak stable.\item[ii)] Every convex combination of slices in $B_X$ is a relatively weakly open subset in $B_X$.\item[iii)] Every convex combination of slices in $B_X$ has nonempty relatively weak interior  in $B_X$. \item[iv)] Every convex combination of relatively weakly open subsets in $B_X$  has nonempty relatively weak interior in $B_X$. \item[v)] $X$ satisfies the property $\overline{\rm P1}$.\end{enumerate}Moreover, one of the above assertions holds if, and only if, $\mu$ is purely atomic, that is $X^*$ is isometrically isomorphic to $\ell_1(\Gamma)$ for some set $\Gamma$.\end{corollary}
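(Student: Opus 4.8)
The whole statement is anchored on Theorem \ref{predual}, which already yields (i) $\Leftrightarrow$ $\mu$ purely atomic, and on the observation recorded in the introduction that weak stability implies (ii)--(v). Thus the plan is to verify that each of (ii)--(v) in turn forces $\mu$ to be purely atomic; combined with Theorem \ref{predual} and with (i) $\Rightarrow$ (ii)--(v), this closes every equivalence at once and proves the final ``moreover'' clause.

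First I would record the soft implications. Since a slice is the intersection of $B_X$ with a weakly open half-space, it is relatively weakly open, so every convex combination of slices is in particular a convex combination of relatively weakly open subsets; hence weak stability (i) makes it relatively weakly open, giving (i) $\Rightarrow$ (ii), and directly (i) $\Rightarrow$ (iv). A nonempty relatively weakly open set is its own relative weak interior, so (ii) $\Rightarrow$ (iii); and because slices are themselves relatively weakly open subsets, (iv) $\Rightarrow$ (iii). Finally (ii) $\Rightarrow$ (v) is immediate: if a convex combination of slices $C$ is relatively weakly open, one may take $U=C\subseteq\overline{C}$ in the definition of $\overline{\rm P1}$. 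In this way (ii) and (iv) are reduced to (iii).

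Next I would close the first cycle through (iii). Exactly as in the proof of Theorem \ref{predual}, an $L_1$ predual in which every convex combination of slices has nonempty relative weak interior must have $\mu$ purely atomic, by \cite[Theorem 4.7]{lo}; Theorem \ref{predual} then upgrades this to (iii) $\Rightarrow$ (i). Together with the previous paragraph this establishes (i) $\Leftrightarrow$ (ii) $\Leftrightarrow$ (iii) $\Leftrightarrow$ (iv) $\Leftrightarrow$ [$\mu$ purely atomic].

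The main obstacle is property $\overline{\rm P1}$, i.e. the implication (v) $\Rightarrow$ (i). Here the soft reduction to (iii) is \emph{not} available: $\overline{\rm P1}$ only places the points of a convex combination of slices $C$ in the relative weak interior of its closure $\overline{C}$, and in infinite dimensions a convex set with empty relative weak interior may well have a closure with nonempty relative weak interior, so (iii) cannot be extracted from (v) by purely topological considerations. The plan is therefore to prove (v) $\Rightarrow$ $\mu$ purely atomic directly, arguing contrapositively: if $\mu$ has a nonatomic part, then the measure-theoretic construction underlying \cite[Theorem 4.7]{lo} produces a convex combination of slices $C$ and a point $x\in C$ admitting no relatively weakly open neighborhood contained in $\overline{C}$, so that $\overline{\rm P1}$ fails. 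Once (v) $\Rightarrow$ $\mu$ purely atomic is secured, Theorem \ref{predual} gives (v) $\Rightarrow$ (i), and all five assertions are seen to be equivalent to purely atomicity, completing the proof.
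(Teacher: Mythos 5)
Your handling of the soft implications and of iii) $\Rightarrow$ i) is correct and coincides with the paper's own route: i) $\Rightarrow$ ii) $\Rightarrow$ iii) and i) $\Rightarrow$ iv) $\Rightarrow$ iii) are immediate, and iii) $\Rightarrow$ i) follows from \cite[Theorem 4.7]{lo} together with Theorem \ref{predual}. Your remark that ii) $\Rightarrow$ v) is also fine (the paper instead gets i) $\Rightarrow$ v) from its earlier observation that weak stability implies all the listed properties). You also correctly isolate the real difficulty: v) $\Rightarrow$ i) cannot be reduced to iii) by topology alone.

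It is exactly at that point that your proposal has a genuine gap. You claim, contrapositively, that if $\mu$ has a nonatomic part then ``the measure-theoretic construction underlying \cite[Theorem 4.7]{lo} produces a convex combination of slices $C$ and a point $x\in C$ admitting no relatively weakly open neighborhood contained in $\overline{C}$.'' But that is precisely the statement requiring proof, and it does not follow from \cite[Theorem 4.7]{lo}: used contrapositively, that theorem only yields a convex combination of slices with empty relative weak interior, i.e.\ it defeats iii), not v). As you yourself note, a convex set with empty relative weak interior may have a closure with nonempty relative weak interior --- this is the whole reason the property carries the bar in $\overline{\rm P1}$ --- so controlling $\overline{C}$ is a strictly stronger requirement, and nothing in your sketch supplies that control; you announce the construction rather than carry it out. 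The paper does not attempt such a construction: it closes v) $\Rightarrow$ i) by citation, namely \cite[Theorem 2.13]{be} together with \cite{hay} and Theorem \ref{predual}. So either invoke those results as the paper does, or actually exhibit, for $\mu$ with a nonatomic part, a convex combination of slices whose \emph{closure} fails to contain a relative weak neighborhood of one of its points; as written, the hardest implication of the corollary is asserted, not proved.
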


\begin{proof} i) $\Rightarrow$ ii) $\Rightarrow$ iii), i) $\Rightarrow$ iv) $\Rightarrow$ iii) and i) $\Rightarrow$ iv) are clear. iii) $\Rightarrow$ i) is a consequence of \cite[Theorem 4.7]{lo} and Theorem \ref{predual}. v) $\Rightarrow$ i) is a consequence of \cite[Theorem 2.13]{be}, \cite{hay} and Theorem \ref{predual}. \end{proof}

We don't know the exact relation between the assertions i) to v) in Corollary \ref{equivalence} for general Banach spaces. In particular, we don't know if assertions i) and ii) in Corollary \ref{equivalence} are equivalent for general Banach spaces.  

Thanks to Theorem \ref{predual}, we can partially answer a question posed in \cite{abralim2}, wether the injective tensor product of 2 Banach spaces with weak stable unit ball has a weak stable unit ball.\\

Given $\ell_1(\Gamma_1)$ and $\ell_1(\Gamma_2)$, we are able to completely identify $\ell_1(\Gamma_1,\ell_1(\Gamma_2))$ and $\ell_1(\Gamma_1\times\Gamma_2)$ by the identification
$$T:\ell_1(\Gamma_1,\ell_1(\Gamma_2))\rightarrow\ell_1(\Gamma_1\times\Gamma_2)$$
where $T(x)(\gamma_1,\gamma_2)=x(\gamma_1)(\gamma_2)$ for every $\gamma_i\in \Gamma_i$ with $i=1,2$. It is well known that $\ell_1(\Gamma_1,\ell_1(\Gamma_2))=\ell_1(\Gamma_1)\widehat{\otimes}_\pi\ell_1(\Gamma_2)$ so the next result arises as a direct consequence of this and Theorem \ref{predual}.

\begin{corollary}
Let $X$ and $Y$ be $L_1$ preduals, then the injective tensor product of $X$ and $Y$ has a weak stable unit ball if and only if both $X$ and $Y$ have weak stable unit balls.
\end{corollary}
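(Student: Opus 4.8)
The plan is to reduce the statement, via Theorem \ref{predual}, to a question about pure atomicity of the dual of the injective tensor product $Z:=X\iten Y$, and to exploit the classical fact that the injective tensor product of two $L_1$ preduals is again an $L_1$ predual (a consequence of the stability of the class of $\mathcal{L}_{\infty,\lambda}$ spaces under $\iten$, since $\ell_\infty^n\iten\ell_\infty^m=\ell_\infty^{nm}$ and Banach--Mazur distances multiply under $\iten$, so that $X\iten Y$ is $\mathcal{L}_{\infty,1+}$). Writing $X^*=L_1(\mu)$ and $Y^*=L_1(\nu)$, I want to establish the chain $Z$ is weak stable $\Leftrightarrow$ $Z^*$ is purely atomic $\Leftrightarrow$ both $\mu$ and $\nu$ are purely atomic $\Leftrightarrow$ both $X$ and $Y$ are weak stable, where the two outer equivalences are Theorem \ref{predual} applied to $Z$, and to $X$ and $Y$, respectively.

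For the implication ``both weak stable $\Rightarrow$ $Z$ weak stable'' I would first invoke Theorem \ref{predual} to write $X^*=\ell_1(\Gamma_1)$ and $Y^*=\ell_1(\Gamma_2)$. Since $\ell_1(\Gamma)$ enjoys both the metric approximation property and the Radon--Nikod\'ym property, the canonical map gives an isometric identification $Z^*=(X\iten Y)^*=X^*\pten Y^*$; combined with the identity $\ell_1(\Gamma_1)\pten\ell_1(\Gamma_2)=\ell_1(\Gamma_1\times\Gamma_2)$ recorded just above the statement, this yields that $Z^*=\ell_1(\Gamma_1\times\Gamma_2)$ is purely atomic. In particular $Z$ is an $L_1$ predual, and Theorem \ref{predual} gives that $Z$ has a weak stable unit ball.

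For the converse I deliberately avoid the general identity $Z^*=X^*\pten Y^*$ (which is in fact false for arbitrary $L_1$ preduals, already for $X=C(K)$, $Y=C(L)$ with $K,L$ uncountable metric compacta) and instead transfer pure atomicity through a norm-one projection. Assuming $Z$ weak stable, and using that $Z$ is an $L_1$ predual, Theorem \ref{predual} gives $Z^*=\ell_1(\Lambda)$ for some $\Lambda$. Fixing $y\in S_Y$ and $y^*\in S_{Y^*}$ with $y^*(y)=1$, the maps $\iota\colon x\mapsto x\otimes y$ and $\pi:=\mathrm{id}_X\otimes y^*$ satisfy $\pi\circ\iota=\mathrm{id}_X$ with $\|\iota\|=\|\pi\|=1$, so $\pi(B_Z)=B_X$, i.e. $\pi$ is a metric surjection of $Z$ onto $X$, and hence its adjoint $\pi^*\colon X^*\to Z^*=\ell_1(\Lambda)$ is an isometric embedding. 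As the Radon--Nikod\'ym property passes to subspaces, $X^*$ has it; but an $L_1(\mu)$ space with the Radon--Nikod\'ym property must have $\mu$ purely atomic, whence $X^*=\ell_1(\Gamma_1)$ and $X$ is weak stable by Theorem \ref{predual}. Interchanging the roles of $X$ and $Y$ gives that $Y$ is weak stable as well.

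I expect the main obstacle to be precisely this converse direction: the naive route through $Z^*=X^*\pten Y^*$ breaks down exactly in the non-purely-atomic case one needs to exclude. The projection argument sidesteps this by only requiring that the already known purely atomic space $\ell_1(\Lambda)=Z^*$ contain an isometric copy of $X^*$, after which the Radon--Nikod\'ym property finishes the job. The only inputs beyond the results of this paper are that $X\iten Y$ is an $L_1$ predual whenever $X$ and $Y$ are, and the standard duality $(X\iten Y)^*=X^*\pten Y^*$ under the approximation and Radon--Nikod\'ym properties, both of which are classical.
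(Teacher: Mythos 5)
Your proof is correct. The ``if'' half is essentially the paper's argument: the paper derives the corollary from the identification $\ell_1(\Gamma_1)\pten\ell_1(\Gamma_2)=\ell_1(\Gamma_1\times\Gamma_2)$ together with Theorem \ref{predual}, exactly as you do, and your explicit appeal to the metric approximation property and the Radon--Nikod\'ym property of $\ell_1(\Gamma_1)$ to justify $(X\iten Y)^*=X^*\pten Y^*$ supplies a step the paper leaves implicit. Where you genuinely diverge is the ``only if'' half. The paper gives no explicit argument there (and, as you correctly observe, the duality $(X\iten Y)^*=X^*\pten Y^*$ cannot be invoked before pure atomicity is known, so a purely dual one-line derivation does not cover this direction); the route most consistent with the rest of the paper is the one it uses in Section 3 for $C_0(K,X)$: $X$ and $Y$ are $1$-complemented in $X\iten Y$ via the very maps $\iota$ and $\pi$ you construct, and weak stability passes to $1$-complemented subspaces, because for a norm-one projection $P$ of $W$ onto $Z$ and relatively weakly open $O_1,O_2\subset B_Z$ one has $\frac{O_1+O_2}{2}=\left(\frac{(P^{-1}(O_1)\cap B_W)+(P^{-1}(O_2)\cap B_W)}{2}\right)\cap B_Z$, while the trace on $B_Z$ of a relatively weakly open subset of $B_W$ is relatively weakly open in $B_Z$. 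Your alternative --- apply Theorem \ref{predual} to $Z$ itself, embed $X^*$ isometrically into $Z^*=\ell_1(\Lambda)$ via $\pi^*$, and deduce pure atomicity of $X^*$ from hereditariness of the Radon--Nikod\'ym property --- is also valid, and has the merit of making explicit why the naive duality route breaks down; but it is heavier, since it requires the (true, classical) fact that $X\iten Y$ is an $L_1$ predual for \emph{arbitrary} $L_1$ preduals $X$ and $Y$. Note that your parenthetical justification of that fact is incomplete as stated: a finite-dimensional subspace of $X\iten Y$ is in general only \emph{almost} contained in a subspace of the form $E\iten F$, so beyond $\ell_\infty^n\iten\ell_\infty^m=\ell_\infty^{nm}$ and submultiplicativity of the Banach--Mazur distance one needs a perturbation argument or a citation. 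The $1$-complementation route avoids this external input altogether, and in particular never needs to know that $X\iten Y$ is an $L_1$ predual.
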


Let us now get into another special case of injective tensor product, the one in which one of the factors is a $C(K)$ space.

\section{Weak stability in the unit ball of $C(K,X)$}

First, we are going to remind the concept of stable unit ball for the norm topology.

\begin{definition}
Let $X$ be a Banach space. We say that $B_X$ is stable if it is stable for the norm topology, that is, if every convex combination of relatively norm open subsets of $B_X$ is relatively open in $B_X$ for the norm topology.
\end{definition}

Stability and weak stability seem to have a very strong connection. In fact, some of the main known properties for weak stable unit balls (see \cite{abralim} and \cite{ha}) are also true for (norm) stable unit balls, as the following results show.

\begin{proposition}\label{estabilidades}
Let $X$ and $Y$ be Banach spaces with stable unit balls, then:
\begin{enumerate}
\item[i)] If $Z$ is a 1-complemented subspace of $X$, $B_Z$ is stable.\label{complementados}
\item[ii)] If $W=X\oplus_\infty Y$ then $B_W$ is stable.\label{infty}
\end{enumerate}
\begin{proof}

i) Let $P:X\rightarrow Z$ be the norm 1 projection. If $X$ is stable, let $O_1,O_2\subset B_Z$ relatively norm open subsets, and $\lambda\in(0,1/2]$. We call $O_Z=(1-\lambda)O_1+\lambda O_2$, so that $O_X=(1-\lambda)(P^{-1}(O_1)\cap B_X)+\lambda(P^{-1}(O_2)\cap B_X)$ satisfy $O_Z=O_X\cap B_Z$ and i) is proved.

ii) Let us take $w_i=(x_i,y_i)\in B_W$ for $i=1,2$ and consider $\lambda\in(0,1/2]$ and $\delta>0$. Considering now $B_\lambda=(1-\lambda)B_1+\lambda B_2$ where $B_i=B_W(w_i,\delta)\cap B_W$ for $i=1,2$ we prove ii) since
$$B_\lambda=((1-\lambda)B_1^X+\lambda B_2^X)\times((1-\lambda)B_1^Y+\lambda B_2^Y)$$
where $B_i^X=B_X(x_i.\delta)$ y $B_i^Y=B_Y(y_i,\delta)$ for $i=1,2$.
\end{proof}
\end{proposition}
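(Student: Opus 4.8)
The plan is to handle the two parts independently, in each case reducing stability of the new ball to the assumed stability of $B_X$ (and $B_Y$) by transporting convex combinations through the natural structure map.

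For i), let $P\colon X\to Z$ denote the norm-one projection and fix relatively norm open sets $O_1,O_2\subset B_Z$ and $\lambda\in(0,1)$. I would lift each $O_i$ to $\widetilde O_i:=P^{-1}(O_i)\cap B_X$ and aim to prove the identity
$$(1-\lambda)O_1+\lambda O_2=\bigl[(1-\lambda)\widetilde O_1+\lambda\widetilde O_2\bigr]\cap B_Z .$$
The forward inclusion is immediate because $O_i\subset\widetilde O_i$. For the reverse one, if $z\in B_Z$ is written as $(1-\lambda)a+\lambda b$ with $a,b\in\widetilde O_i$, then applying $P$ and using $Pz=z$ yields $z=(1-\lambda)Pa+\lambda Pb$ with $Pa\in O_1$ and $Pb\in O_2$. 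Once the identity is established, stability of $B_X$ makes the bracketed set relatively open in $B_X$, and intersecting with $B_Z$ gives the conclusion. The only delicate point is that $\widetilde O_i$ must itself be relatively open in $B_X$: this is exactly where the norm-one hypothesis enters, since $\|P\|\leq1$ forces $P(B_X)\subset B_Z$, so on $B_X$ the condition $Px\in O_i$ reduces to $Px$ lying in an ambient open superset of $O_i$ in $Z$, and $\widetilde O_i$ is then the trace on $B_X$ of an open set.

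For ii), I would first observe that the sup norm identifies $B_W$ with $B_X\times B_Y$ and, crucially, turns every open ball into a product of balls, $B_W((x,y),\delta)=B_X(x,\delta)\times B_Y(y,\delta)$. Since an arbitrary relatively open subset of $B_W$ need not be a product of open sets, I cannot factor the convex combination directly; instead I would verify openness pointwise. Given $p=(1-\lambda)o_1+\lambda o_2$ inside a convex combination of relatively open sets $O_1,O_2$, I pick balls $B_i:=B_W(o_i,\delta)\cap B_W\subset O_i$. Because convex combinations in $W$ are computed coordinatewise, these balls split and
$$(1-\lambda)B_1+\lambda B_2=\bigl[(1-\lambda)B_1^X+\lambda B_2^X\bigr]\times\bigl[(1-\lambda)B_1^Y+\lambda B_2^Y\bigr],$$
where $B_i^X,B_i^Y$ are the traces on $B_X,B_Y$ of the corresponding balls. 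Stability of $B_X$ and $B_Y$ renders each factor relatively open, so the product is a relative neighbourhood of $p$ contained in $(1-\lambda)O_1+\lambda O_2$; hence the latter is open.

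I expect the main obstacle to be the factorization step in ii): although open sets in an $\ell_\infty$-sum do not split as products, the point is that it suffices to test openness of convex combinations on the product balls forming a neighbourhood basis, after which the stability of the two factors closes the argument. In i) the analogous subtlety is purely the verification that the lifted sets remain relatively open in $B_X$, which is the precise place the norm-one character of the projection is used.
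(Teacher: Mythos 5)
Your proposal is correct and follows essentially the same route as the paper's proof: in i) you lift the open sets through $P^{-1}$, prove the trace identity $(1-\lambda)O_1+\lambda O_2=\bigl[(1-\lambda)\widetilde O_1+\lambda\widetilde O_2\bigr]\cap B_Z$, and invoke stability of $B_X$; in ii) you exploit that sup-norm balls factor as products, reduce to convex combinations of such balls, and apply stability of each factor coordinatewise. The only difference is that you spell out the verifications (both inclusions of the identity, relative openness of the lifted sets via $\|P\|\leq 1$, and the pointwise neighbourhood argument) that the paper leaves implicit.
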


\begin{proposition}\label{C0suma}
If $(X_n)$ is a sequence of Banach spaces with stable unit ball, then $X=C_0(X_n)$ has a stable unit ball.
\begin{proof}
Let $x,s^1,s^2\in C_0(X_n)$ and $\lambda>0$ be such that $(1-\lambda)s^1+\lambda s^2=x$ where $x=(x_n)$, $s^1=(s^1_n)$ and $s^2=(s^2_n)$. We define for arbitrary $\ep\in(0,1)$ the set
$$B_x=(1-\lambda)B_{C_0(X_n)}(s^1,\ep)\cap B_{C_0(X_n)}+\lambda B_{C_0(X_n)}(s^2,\ep)\cap B_{C_0(X_n)}.$$
Let us now take $N\in \N$ such that $\forall n> N$, $||s^i_n||<1-\ep$ for $i=1,2$. For every $n\le N$, we first consider $\delta_n>0$ satisfying
$$B_{X_n}(x_n,\delta_n)\cap B_{X_n}\subset (1-\lambda)B_{X_n}(s^1_n,\ep)\cap B_{X_n}+\lambda B_{X_n}(s^2_n,\ep)\cap B_{X_n}:=B_{x_n}.$$
Now, taking $0<\delta<\min\limits_{n=1,\dots,N}\{\delta_n,\ep\}$, we claim that $B_{C_0(X_n)}(x,\delta)\cap B_{C_0(X_n)}\subset B_x$:\\

If $y=(y_n)\in B_{C_0(X_n)}(x,\delta)\cap B_{C_0(X_n)}$, for $n\le N$, then $y_n\in B_{X_n}(x_n,\delta)\cap B_{X_n}\subset B_{x_n}$. Otherwise, if $n>N$ then $y_n=(1-\lambda)y_n^1+\lambda y_n^2$ where
$$y_n^1=s^1_n+y_n-x_n\in B_{X_n}(s^1_n,\ep)\cap B_{X_n} \;\;,\;\; y^2_n=s^2_n+y_n-x_n\in B_{X_n}(s^2_n,\ep)\cap B_{X_n}$$
and we are done.
\end{proof}
\end{proposition}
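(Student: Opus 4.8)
The plan is to verify stability directly from its definition. A general relatively open set in $B_X$ is a union of basic balls and convex combinations distribute over unions, so it suffices to fix $\lambda\in(0,1)$, points $s^1,s^2\in B_X$ with $x=(1-\lambda)s^1+\lambda s^2$, and $\epsilon>0$, and to find $\delta>0$ with
$$B_X(x,\delta)\cap B_X\subset(1-\lambda)\bigl(B_X(s^1,\epsilon)\cap B_X\bigr)+\lambda\bigl(B_X(s^2,\epsilon)\cap B_X\bigr).$$
Writing $x=(x_n)$, $s^i=(s^i_n)$ so that $x_n=(1-\lambda)s^1_n+\lambda s^2_n$, the task reduces to decomposing each $y=(y_n)$ near $x$ coordinatewise as $y_n=(1-\lambda)y^1_n+\lambda y^2_n$, with the summands landing in the prescribed balls.

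The decisive feature of $C_0(X_n)$ is that coordinate norms tend to zero, which lets me split the indices into a finite head and an infinite tail. First I would pick $N$ with $\|s^i_n\|<1-\epsilon$ for all $n>N$ and $i=1,2$. On the tail the available norm slack allows free perturbation: setting $y^i_n:=s^i_n+(y_n-x_n)$ automatically gives $(1-\lambda)y^1_n+\lambda y^2_n=y_n$, while $\|y^i_n-s^i_n\|=\|y_n-x_n\|$ and $\|y^i_n\|\le\|s^i_n\|+\|y_n-x_n\|<1$ as soon as $\|y_n-x_n\|<\epsilon$. On the finite head I would invoke the stability of each factor $X_n$: since $x_n=(1-\lambda)s^1_n+\lambda s^2_n$ with $s^i_n\in B_{X_n}$, stability furnishes $\delta_n>0$ with $B_{X_n}(x_n,\delta_n)\cap B_{X_n}$ contained in the $\lambda$-combination of the $\epsilon$-balls around $s^1_n$ and $s^2_n$.

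Choosing $\delta:=\min\{\epsilon,\delta_1,\dots,\delta_N\}$ should then close the argument: for $y\in B_X(x,\delta)\cap B_X$, decompose the head coordinates using $X_n$-stability and the tail coordinates by the explicit formula above. What remains is to confirm that $y^1=(y^1_n)$ and $y^2=(y^2_n)$ truly lie in $C_0(X_n)$ and in the required balls. Membership in $C_0(X_n)$ is automatic because $y-x\in C_0(X_n)$ forces $\|y_n-x_n\|\to 0$, so the tail norms $\|y^i_n\|$ vanish; membership in $B_X$ follows coordinatewise. The only point demanding care, and the one I would flag as the crux, is verifying $\|y^i-s^i\|<\epsilon$ in the supremum norm: the finite head contributes a maximum of finitely many quantities each below $\epsilon$, hence itself below $\epsilon$, but the tail requires the \emph{uniform} estimate $\|y^i_n-s^i_n\|=\|y_n-x_n\|\le\delta<\epsilon$, valid for all $n>N$ simultaneously, so that the global supremum stays strictly below $\epsilon$. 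It is precisely this uniform tail control, rather than a merely coordinatewise bound, that makes the open-ball inclusion go through.
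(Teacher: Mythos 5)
Your proof is correct and follows essentially the same argument as the paper's: split the coordinates into a finite head, handled by the stability of each factor $X_n$, and a tail where $\|s^i_n\|<1-\varepsilon$ permits the explicit translation $y^i_n=s^i_n+(y_n-x_n)$, then take $\delta$ as the minimum of $\varepsilon$ and the finitely many $\delta_n$. The extra details you spell out (the reduction to basic balls, membership of $y^i$ in $C_0(X_n)$, and the uniform supremum estimate on the tail) are precisely the points the paper leaves implicit.
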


It can be easily shown that there exist Banach spaces with stable but not weak stable unit balls such as strictly convex spaces \cite{Clau}. However, all the examples that we have of weak stable unit balls of Banach spaces are also norm stable. We don't know if weak stability implies norm stability.\\

As we said in the introduction, our main goal is to get conditions to have (weak) stability in $C_0(K,X)$. For the weak stability, we will use strongly multivalued maps techniques. So, we begin by showing an easy consequence of these techniques, which it will be useful to get our main result.

\begin{lemma}\label{sequences}  Let $C$ be a stable convex subset of a metrizable topological vector space $X$. Consider a sequence $\{x_n\}$ in $C$ and $x\in C$ such that  $\{x_n\}$ converges to $x$.  If $p_1,p_2, q_1,q_2\in C$ satisfying that $x=\frac{p_1+p_2}{2}=\frac{q_1+q_2}{2}$, then there are $\{p_1^n\}$, $\{p_2^n\}$, $\{q_1^n\}$ and $\{q_2^n\}$ sequences in $C$, and there is $\{x_{\sigma(n)}\}$ a subsequence of $\{x_n\}$, such that $x_{\sigma(n)}=\frac{p_1^n+p_2^n}{2}=\frac{q_1^n+q_2^n}{2}$ for every $n\in\natu$, with $\{p_i^n\}$ converging to $p_i$ and $\{q_i^n\}$ converging to $q_i$, for $i=1,2$.\end{lemma}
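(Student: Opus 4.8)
The plan is to reformulate stability of $C$ as lower semicontinuity of the inverse midpoint map and then invoke the sequential form of Lemma \ref{redes} twice, once for each of the two decompositions $(p_1,p_2)$ and $(q_1,q_2)$ of $x$, taking care to select a single common subsequence at the end.

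First I would consider the midpoint map $m:C\times C\rightarrow C$ given by $m(a,b)=\frac{a+b}{2}$. By hypothesis $C$ is stable, so $m$ is open; and, as recalled just before Lemma \ref{redes}, openness of $m$ is equivalent to lower semicontinuity of the multivalued map $m^{-1}:C\rightarrow C\times C$, $m^{-1}(z)=\{(a,b)\in C\times C:\frac{a+b}{2}=z\}$. Since $X$ is metrizable, both $C$ and $C\times C$ are metrizable and hence first countable, so the sequential version of Lemma \ref{redes} applies to $m^{-1}$.

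Next I would lift the convergence $\{x_n\}\to x$ through the decomposition $(p_1,p_2)$. As $(p_1,p_2)\in m^{-1}(x)$, the sequential form of Lemma \ref{redes} provides a subsequence $\{x_{\tau(n)}\}$ of $\{x_n\}$ and a sequence $\{(a_n,b_n)\}$ in $C\times C$ with $(a_n,b_n)\in m^{-1}(x_{\tau(n)})$ for every $n$ and $(a_n,b_n)\to(p_1,p_2)$; that is, $\frac{a_n+b_n}{2}=x_{\tau(n)}$ with $a_n\to p_1$ and $b_n\to p_2$. The subsequence $\{x_{\tau(n)}\}$ still converges to $x$, so I would apply Lemma \ref{redes} once more, this time to the point $(q_1,q_2)\in m^{-1}(x)$ and the convergent sequence $\{x_{\tau(n)}\}$. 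This yields a further subsequence $\{x_{\tau(\rho(n))}\}$ together with a sequence $\{(c_n,d_n)\}$ such that $\frac{c_n+d_n}{2}=x_{\tau(\rho(n))}$, $c_n\to q_1$ and $d_n\to q_2$.

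Finally I would set $\sigma=\tau\circ\rho$ and define $q_1^n=c_n$, $q_2^n=d_n$, while obtaining the $p$-lifts by restricting the first lift to the indices chosen by the second, namely $p_1^n=a_{\rho(n)}$ and $p_2^n=b_{\rho(n)}$. Then $\frac{p_1^n+p_2^n}{2}=x_{\tau(\rho(n))}=x_{\sigma(n)}=\frac{q_1^n+q_2^n}{2}$ for every $n\in\natu$, the sequences $\{p_i^n\}$ converge to $p_i$ as subsequences of convergent sequences, and the $\{q_i^n\}$ converge to $q_i$ by construction, which is exactly the assertion. The only delicate point is to force one subsequence to work for both decompositions at once; this is precisely what the successive application of the two lifts, followed by restricting the first lift along $\rho$, accomplishes, and I do not expect any further obstacle.
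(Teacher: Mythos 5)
Your proof is correct. It rests on exactly the same two pillars as the paper's: stability of $C$ reformulated as openness of the midpoint map, hence lower semicontinuity of its inverse multivalued map, and then the sequential form of Lemma \ref{redes} (legitimate here since metrizability gives first countability of $C$ and $C\times C$). The only difference is how you force a \emph{single} subsequence to serve both decompositions: the paper does it in one stroke, applying Lemma \ref{redes} to the product map $\phi\times\phi:(C\times C)\times(C\times C)\rightarrow C\times C$, which is open as a product of open maps, with the constant-pair sequence $\{(x_n,x_n)\}$ converging to $(x,x)$ and the point $(p_1,p_2,q_1,q_2)\in(\phi\times\phi)^{-1}(x,x)$; you instead apply the lemma twice to the single map $m^{-1}$ and then restrict the first lift along the second extraction, $p_i^n=a_{\rho(n)}$, $b_{\rho(n)}$. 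Both devices are sound: the paper's is slightly slicker and generalizes immediately to any finite number of simultaneous decompositions with no extra notation, while yours avoids having to observe that products of open maps are open, at the cost of the bookkeeping with $\sigma=\tau\circ\rho$, which you carry out correctly.
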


\begin{proof} Consider $\phi:C\times C\rightarrow C$ the midpoint map on $C$, given by $\phi(x,y)=\frac{x+y}{2}$. As $C$ is stable, we have that $\phi$ is open, so the map $\phi\times\phi:(C\times C)\times (C\times C)\rightarrow (C\times C)$ given by $(\phi\times\phi)(x,y,z,w)=(\phi(x,y),\phi(z,w))$ is also open. Then the multivalued map $(\phi\times\phi)^{-1}$ is l.s.c. Since $\{x_n,x_n\}$ converges to $(x,x)$, we get, from Lemma \ref{redes}, the desired conclusion.\end{proof}

We obtain now our result about stability of $C_0(K,X)$.

\begin{theorem}\label{vectorvalued} Let $X$ be a stable and weak stable Banach space and $K$ a scattered Hausdorff topological space. Then $C_0(K,X)$ is weak stable.\end{theorem}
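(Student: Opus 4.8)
### Proof proposal for Theorem \ref{vectorvalued}

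The plan is to verify weak stability of $C_0(K,X)$ directly from the definition, by fixing two relatively weakly open sets $O_1,O_2\subset B_{C_0(K,X)}$ together with a midpoint $f=\frac{g_1+g_2}{2}$, with $g_i\in O_i$, and producing a weak neighborhood of $f$ contained in $\frac{O_1+O_2}{2}$. Since $K$ is scattered and Hausdorff, the natural strategy is to exploit the Cantor--Bendixson structure of $K$: isolated points behave like coordinates of a $c_0$-type sum, where the weak topology on $B_X$ at each fiber gives local openness of the midpoint map, while points of higher Cantor--Bendixson rank must be handled by a limiting/continuity argument. As a first reduction I would shrink $g_i$ toward $f$ exactly as in Theorem \ref{predual}, replacing $g_i$ by $s_i:=(1-\rho)g_i+\rho f\in O_i$ for small $\rho>0$, so that the values $s_i(t)$ are uniformly strictly inside $B_X$ wherever $f(t)$ is, gaining the room needed to perturb.

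The heart of the argument is a fiberwise construction. For a candidate $h$ weakly close to $f$, I want to write $h=\frac{h_1+h_2}{2}$ with $h_i$ close to $s_i$ in the relevant weak sense and $h_i\in B_{C_0(K,X)}$. At each point $t\in K$ I must split $h(t)$ as a midpoint of two elements of $B_X$ near $s_i(t)$; this is precisely where the weak stability of $B_X$ enters, guaranteeing that the local midpoint splitting can be done inside a prescribed weak neighborhood, while the (norm) stability of $B_X$ is what lets me control these splittings uniformly and patch them into a genuine continuous $X$-valued function that vanishes at infinity. The multivalued-map machinery is the right tool: I would define, analogously to the map $\phi$ in Theorem \ref{predual}, the fiber map whose value at $t$ is the set of admissible pairs $(u,v)\in B_X\times B_X$ with $\frac{u+v}{2}=h(t)$ and $u,v$ lying in the designated weak neighborhoods of $s_i(t)$, show it is lower semicontinuous using Lemma \ref{redes} (the scattered structure and Lemma \ref{sequences} supplying the subnet/subsequence that realizes a target splitting at a limit point), and then invoke a continuous selection to obtain $h_1,h_2$.

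The main obstacle I anticipate is the simultaneous control of continuity and the vanishing-at-infinity condition when passing from the pointwise midpoint splittings to honest elements of $C_0(K,X)$: the naive fiberwise choice need not be continuous across points of positive Cantor--Bendixson rank, and the weak neighborhoods defining $O_1,O_2$ are specified only through finitely many functionals, so the splitting must be compatible with those constraints at the accumulation points. I expect to resolve this by transfinite induction on the Cantor--Bendixson derivatives of $K$: on the isolated points of each derived set I solve the splitting problem directly via weak stability of $B_X$ with uniform norm gaps coming from the $s_i$ reduction, and at limit points I use Lemma \ref{sequences} together with lower semicontinuity to extend the selection continuously, exactly the situation Lemma \ref{redes} is designed to handle. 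The stability hypotheses on $X$ are used in two distinct roles here, and keeping the weak-neighborhood bookkeeping consistent through the inductive step is the technically delicate part.
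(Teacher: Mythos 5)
Your proposal assembles the right tools (fiberwise weak stability of $B_X$, Lemma \ref{sequences}, Michael selection), but it misses the one idea that makes the proof work, and the route you propose in its place does not close the gap. In the paper, scatteredness is used exactly once, and not through the Cantor--Bendixson hierarchy: since $K$ is scattered, $C_0(K)^*=\ell_1(K)$, hence $B_{C_0(K,X)^*}=\overline{\rm co}\{\delta_t\otimes x^*: t\in K,\ x^*\in S_{X^*}\}$, so the functionals defining $O_1,O_2$ may be replaced, up to a small error, by convex combinations of functionals $\delta_t\otimes x^*$ with $t$ ranging over a \emph{finite} set $E\subset K$ and $x^*$ over a finite set $A\subset S_{X^*}$. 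This reduces all weak-neighborhood bookkeeping to finitely many points of $K$, and it is precisely what converts fiberwise splittings into membership in $O_i$: a functional supported on $E$ only sees the values at $E$. Your proposal never explains how fiberwise control of the splittings yields membership in $O_i$ when the $O_i$ are cut out by arbitrary elements of $C_0(K,X)^*$ (which are vector measures, not combinations of point evaluations); the transfinite induction on derived sets does not address this, and this is the actual crux of the theorem.

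Second, your selection scheme runs Michael's theorem over the wrong domain. You propose a fiber map defined on $K$ (its value at $t$ being the admissible pairs above $h(t)$). But $K$ need not be paracompact (e.g.\ $[0,\omega_1)$ is scattered, locally compact and Hausdorff), so Michael's principle need not apply there; and even if it did, you would still face the vanishing-at-infinity and continuity problems you flag, with no mechanism to resolve them at limit stages of your induction. The paper instead defines, for each of the finitely many $t\in E$, a multivalued map $F_t$ on $O_t:=\frac{B_1^t+B_2^t}{2}\subset B_X$, i.e.\ the domain is a subset of the \emph{value space} $X$, which is metrizable and hence paracompact; $F_t(z)$ is the closed convex set of pairs in $\overline{B_1^t}\times\overline{B_2^t}$ averaging to $z$, and norm stability of $B_X$ enters only through Lemma \ref{sequences} to prove that $F_t$ is norm l.s.c. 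The continuous selection $(f_1^t,f_2^t)$ is then composed with $y$ and glued using Urysohn functions $n_t$ supported in finitely many disjoint open sets $V_t\ni t$, setting $\widetilde{s_i}=n_t\, f_i^t(y(\cdot))+(1-n_t)\,y$ on $V_t$ and $\widetilde{s_i}=y$ elsewhere. Continuity in $K$ is then automatic (a composition of continuous maps), membership in $C_0(K,X)$ is immediate since $\widetilde{s_i}$ differs from $y$ only on $\bigcup_{t\in E}V_t$, and $\widetilde{s_i}\in O_i$ because the approximating functionals only see the points of $E$, where $n_t(t)=1$ and $\widetilde{s_i}(t)=f_i^t(y(t))\in\overline{B_i^t}$. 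Without the $\ell_1(K)$ reduction and without moving the selection problem from $K$ to the value space, your outline states the difficulty rather than resolving it.
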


\begin{proof} Consider $O_1, O_2$ nonempty relative weakly open subsets of $B_{C_0(K,X)}$. We are going to prove that $\frac{O_1+O_2}{2}$ is  again a relative weakly open subset of $B_{C_0(K,X)}$. Pick $x\in \frac{O_1+O_2}{2}$ and put $x=\frac{s_1+s_2}{2}$, with $s_i\in O_i$, $i=1,2$.

We can assume that, for $i=1,2$, $O_i=\{x\in B_{C_0(K,X)}:\vert f_{i,j}(x)-\alpha_{i,j}\vert<\varepsilon_{i,j},1\leq j\leq k_i\}$, where $k_i\in \natu$, $f_{i,j}\in S_{C_0(K,X)^*}$ and $\alpha_{i,j}\in [-1,1]$ for $1\leq j\leq k_i$. Define $\mu_{i,j}:=\varepsilon_{i,j}-\vert f_{i,j}(x)-\alpha_{i,j}\vert$ and $\varepsilon:=\min_{1\leq i\leq 2,1\leq j\leq k_i}\mu_{i,j}>0$.

Note now that $B_{C_0(K,X)^*}=\overline{\rm co}\{\delta_t\otimes x^*:t\in K, x^*\in S_{X^*}\}$ ($\delta_t$ denotes the evaluation functional at $t$ as an element in $C_0(K)^*$). Indeed, as $C_0(K,X)^*=C_0(K)^*\hat{\otimes}_{\pi} X^*$ and $B_{C_0(K,X)^*}=\overline{\rm co}(S_{C_0(K)^*}\otimes S_{X^*})$, we get that $B_{C_0(K,X)^*}=\overline{\rm co}\{\delta_t\otimes x^*:t\in K, x^*\in S_{X^*}\}$, since $K$ is scattered and so $C_0(K)^*=\ell_1(K)$. Then, for every $i\in\{1,2\}$ and every $1\leq j\leq k_i$ we can find out $N_{ij}\in\mathbb{N}$ and finite subsets $E\subset K$ and $A\subset S_{X^*}$, both with $N_{ij}$ elements, so that $\Vert g_{ij}-f_{ij}\Vert<\varepsilon/3$, where $g_{ij}\in {\rm co}(\delta_{E}\otimes A)$ being $\delta_{E}=\{\delta_t:t\in E\}$.

Now we get that $$U_i:=\{y\in B_{C_0(K,X)}:\vert g_{ij}(y-s_i)\vert <\varepsilon/3, 1\leq j\leq k_i\}\subset O_i\ \forall i\in\{1,2\}.$$
For every $t\in E$ and for every $i\in\{1,2\}$ we define
$$B_i^t=\{z\in B_X:\vert x^*(z-s_i(t))\vert <\ep/4\ \forall x^*\in A\}.$$
It is clear that $B_i^t$ is a nonempty, convex and relatively weak open subset of $B_X$. Furthermore, as $X$ is weak stable, we have that $O_t:=\frac{B_1^t+B_2^t}{2}$ is a relatively weak open subset of $B_X$ containing $x(t)$, for every $t\in E$. So there are $\delta>0$ and $x_1^*,\ldots ,x_m^*\in B_{X^*}$ such that $$\{z\in B_X:\vert x_i^*(z-x(t))\vert<2\delta,\ 1\leq i\leq m\}\}\subset O_t.$$
Put $$U=\{y\in B_{C_0(K,X)}:\vert x^*(y(t)-x(t))\vert<\delta\ \forall t\in E,\ \forall x^*\in A\cup\{x_1^*,\dots,x_m^*\}\}.$$
It is clear that $U$ is a relatively weak open subset of $B_{C_0(K,X)}$ containing $x$. Now our goal will be to show that $U\subset\frac{O_1+O_2}{2}$, which is enough to finish the proof.

Take $y\in U$. Then we may choose a finite family of disjoint open subsets $\{V_t:t\in E\}$ of $K$ such that $t\in V_t$ for every $t\in E$ and satisfying
$$\vert x^*(y(s)-y(t))\vert<\delta\ \forall s\in V_t,\ \forall x^*\in A\cup\{x_1^*,\dots,x_m^*\}$$ for every $t\in E$. Now it is clear that $y(s)\in O_t$ for every $s\in V_t$. 

Our next step will be to define for $i=1,2$ a function $s_i:K\rightarrow X$. If $s\in K\setminus \bigcup\limits_{t\in E}V_t$ we define $\widetilde{s_i}(s)=y(s)$. Given $t\in E$, before define $s_i(s)$ for $s\in V_t$, we consider the multivalued map $F_t: O_t\twoheadrightarrow X^2$ given by %%%decir que el cierre es en norma
$$F_t(z)=\left\{(z_1,z_2)\in \overline{B_1^t}\times\overline{B_2^t}:\frac{z_1+z_2}{2}=z\right\}\ \forall z\in O_t.$$
Note that $O_t$ is paracompact in norm, since $X$ is. Also, $X^2$ is a Fr\'echet space and $F_t(z)$ is a nonempty, closed and convex subset for every $z\in O_t$. Then, in order to apply the Michael selection principle, it is enough to check that $F_t$ is a norm lower semicontinuous multivalued map. To do this we will apply Lema \ref{redes}, so  pick $\{z_n\}$ a sequence in $O_t$ converging in the norm topology to some point $z\in O_t$. Also we pick $(p_1,p_2)\in \overline{B_1^t}\times \overline{B_2^t}$ such that $\frac{p_1+p_2}{2}=z$. As $O_t=\frac{B_1^t +B_2^t}{2}$, we can take $(q_1,q_2)\in B_1^t\times B_2^t$ such that $\frac{q_1+q_2}{2}=z$. From Lemma \ref{sequences} there are a strictly increasing map $\sigma:\mathbb{N}\rightarrow\mathbb{N}$ and  sequences
$ \{(p_1^n,p_2^n)\}$, $ \{(q_1^n,q_2^n)\}$ in $B_X\times B_X$ so that for every $i=1,2$ $\{p_i^n\}$ converges to $p_i$, $\{q_i^n\}$ converges to $q_i$ and
$$\frac{p_1^n+p_2^n}{2}=\frac{q_1^n+q_2^n}{2}=z_{\sigma(n)} \ \forall n\in\mathbb{N}.$$
For $i=1,2$, $\lambda\in [0,1]$ and $n\in \mathbb{N}$ define $v_i^n(\lambda)=(1-\lambda)q_i^n+\lambda p_i^n$ so that $\{v_i^n\}$ converges  $v_i(\lambda):=(1-\lambda)q_i+\lambda p_i$. As $p_i\in \overline{B_i^t}$, $q_i\in B_i^t$ and $B_i^t$ is convex we get that $v_i(\lambda)\in B_i^t$ for every $\lambda\in [0,1)$. Take now a sequence $(t_r)_{r\in\N}\subset [0,1)$ converging to $1$. As $B_i^t$ is open, we can choose a strictly increasing sequence $\{N_r\}\in\N$ such that $v_i^{N_r}(t_r)\in B_i^t$. Finally we put $y_r:=(v_1^{N_r}(t_r),v_2^{N_r}(t_r))$. It is clear that $y_r\in F_t(z_{\sigma(N_r)})$ for every $r$, and $\{y_r\}$ converges to $(p_1,p_2)$, which proves that $F_t$ is norm l.s.c.

Appliyng the Michael selection principle to $F_t$, we can take a continuous selection $f_t=(f_1^t,f_2^t)$ of $F$. From Urysohn lemma there is a continuous function  $n_t:K\rightarrow[0,1]$ such that $n_t(t)=1$ and $n_t(s)=0$ for every $s\in K\setminus V_t$. Now we define for $i=1,2$
$$\widetilde{s_i}(s)=n_t(s)f_i^t(y(s))+(1-n_t(s))y(s)\ \forall s\in V_t.$$
Recall that we had defined before $\widetilde{s_i}(s)=y(s)$ for every $s\in K\setminus \bigcup\limits_{t\in E}V_t$. It is clear that $\widetilde{s_i}\in C_0(K,X)$ for $i=1,2$ and $y=\frac{\widetilde{s_1}+\widetilde{s_2}}{2}$.
 \end{proof}

It is worth to mention that this result generalizes a known result  \cite[Th. 2.5 b)]{abralim2} for finite dimensional $X$. In fact, in \cite{abralim2} it is considered the property (co):

\begin{definition}
A Banach space $X$ is said to be (co) if for every $x,s_1,s_2\in B_X$, $\lambda\in(0,1/2]$ satisfying that $x=(1-\lambda)s_1+\lambda s_2$ and  $\ep>0$, there is $\delta>0$ such that there exist two continuous functions
\begin{equation}\label{co}v_i:B_X(x,\delta)\cap B_X\rightarrow B_X(s_i,\ep)\cap B_X\;\;\;\;\forall i\in\{1,2\}\end{equation}
satisfying $y=(1-\lambda)v_1(y)+\lambda v_2(y)$ for every $y\in B(x,\delta)\cap B_X$.
\end{definition}

Using our selection technique, it is easy to prove that property (co) is equivalent to norm stability of the unit ball for arbitrary Banach spaces.

\begin{proposition}\label{co}
Let $X$ be a Banach space. $X$ is (co) if and only if $B_X$ is stable.
\begin{proof}
Property (co) implies the stability of $B_X$ by Proposition 2.2 of \cite{abralim2}. Now, if $B_X$ is stable and we have $x,s_1,s_2\in B_X$, $\lambda\in(0,1/2]$ and $\ep>0$ such that $x=(1-\lambda)s_1+\lambda s_2$, then we consider the multivalued map given by
$$\phi:B_X(x,\delta)\cap B_X\rightarrow \big(B_X(s_1,\ep)\cap B_X\big)\times \big(B_X(s_2,\ep)\cap B_X\big)$$
$$\phi(y)=\big\{(z_1,z_2)\in B_X\times B_X\;:\;(1-\lambda)z_1+\lambda z_2=y ,\;z_i\in \overline{B_X(s_i,\ep/2)},\,i=1,2\big\}$$
where $\delta>0$ is such that $B_X(x,\delta)\cap B_X\subset (1-\lambda)B_X(s_1,\ep/2)\cap B_X+\lambda B_X(s_2,\ep/2)\cap B_X$ (which exists because $B_X$ is stable). By the same argument as in Theorem \ref{vectorvalued} we are able to prove that $\phi$ is norm l.s.c. so applying the Michael Selection Principle to $\phi$ we are done.
\end{proof}
\end{proposition}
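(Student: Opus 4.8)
The plan is to prove the two implications separately, noting that essentially all of the content lies in the implication \emph{stable $\Rightarrow$ (co)}. For the reverse implication, if $X$ is (co) and $O_1,O_2$ are relatively open in $B_X$ with $\lambda\in(0,1)$, then for any $x=(1-\lambda)s_1+\lambda s_2$ with $s_i\in O_i$ I would pick $\ep>0$ small enough that $B_X(s_i,\ep)\cap B_X\subset O_i$ and apply (co) to produce $\delta>0$ and continuous $v_i$; the identity $y=(1-\lambda)v_1(y)+\lambda v_2(y)$ immediately gives $B_X(x,\delta)\cap B_X\subset (1-\lambda)O_1+\lambda O_2$, so the convex combination is relatively open. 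This is exactly the content of Proposition 2.2 of \cite{abralim2}, which I would simply cite.

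For \emph{stable $\Rightarrow$ (co)}, fix $x,s_1,s_2,\lambda,\ep$ as in the definition. Since $B_X$ is stable, the convex combination $(1-\lambda)\big(B_X(s_1,\ep/2)\cap B_X\big)+\lambda\big(B_X(s_2,\ep/2)\cap B_X\big)$ is relatively open and contains $x$, so I can choose $\delta>0$ with $B_X(x,\delta)\cap B_X$ contained in it. I would then define the multivalued map $\phi$ on $B_X(x,\delta)\cap B_X$ by $\phi(y)=\{(z_1,z_2)\in B_X\times B_X: (1-\lambda)z_1+\lambda z_2=y,\ z_i\in\overline{B_X(s_i,\ep/2)}\}$ and aim to apply the Michael selection principle. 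The domain is a metric space, hence paracompact, and $X\times X$ is a Fréchet space; each $\phi(y)$ is convex and closed, being the intersection of the affine condition with a product of closed balls, and it is nonempty by the choice of $\delta$. So the only nontrivial hypothesis to verify is that $\phi$ is norm l.s.c.

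The hard part, exactly as in the proof of Theorem \ref{vectorvalued}, is this lower semicontinuity, and I would verify it through Lemma \ref{redes}. First I would record that stability makes the map $\psi_\lambda(z_1,z_2)=(1-\lambda)z_1+\lambda z_2$ open on $B_X\times B_X$ for the fixed $\lambda$, so the $\lambda$-analogue of Lemma \ref{sequences} holds with $\psi_\lambda$ in place of the midpoint map. Given $y_n\to y$ and $(p_1,p_2)\in\phi(y)$, the point $p_i$ may lie on the boundary of $\overline{B_X(s_i,\ep/2)}$, so a direct lifting of $y_n$ need not stay inside the closed balls; this is the obstacle the convexity trick resolves. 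I would pick $(q_1,q_2)$ with $q_i\in B_X(s_i,\ep/2)\cap B_X$ (the \emph{open} balls) and $\psi_\lambda(q_1,q_2)=y$, then use the $\lambda$-analogue of Lemma \ref{sequences} to obtain a subsequence $y_{\sigma(n)}$ and sequences $p_i^n\to p_i$, $q_i^n\to q_i$ with $\psi_\lambda(p_1^n,p_2^n)=\psi_\lambda(q_1^n,q_2^n)=y_{\sigma(n)}$. Interpolating $v_i^n(\mu)=(1-\mu)q_i^n+\mu p_i^n$ along a sequence $\mu_r\uparrow 1$ and using that a convex combination of an interior point $q_i$ with a boundary point $p_i$ stays in the open ball $B_X(s_i,\ep/2)$ for $\mu<1$, I would extract indices $N_r$ with $v_i^{N_r}(\mu_r)\in B_X(s_i,\ep/2)$; affinity of $\psi_\lambda$ gives $\psi_\lambda(v_1^{N_r}(\mu_r),v_2^{N_r}(\mu_r))=y_{\sigma(N_r)}$, so these pairs lie in $\phi(y_{\sigma(N_r)})$ and converge to $(p_1,p_2)$, which is precisely condition (ii) of Lemma \ref{redes}.

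Finally, the Michael selection principle yields a continuous selection $(v_1,v_2)$ of $\phi$. Since $v_i(y)\in\overline{B_X(s_i,\ep/2)}\cap B_X\subset B_X(s_i,\ep)\cap B_X$ and $(1-\lambda)v_1(y)+\lambda v_2(y)=y$ by definition of $\phi$, the maps $v_1,v_2$ are exactly the continuous functions required by property (co). I expect the l.s.c. verification — specifically handling points $p_i$ on the boundary of the closed balls by interpolation with strictly interior points — to be the only delicate step; everything else is a routine check of the Michael selection hypotheses.
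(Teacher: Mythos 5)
Your proposal is correct and takes essentially the same route as the paper: one direction is Proposition 2.2 of \cite{abralim2}, and the other uses the same choice of $\delta$ via stability, the same multivalued map $\phi$ with closed $\ep/2$-balls, lower semicontinuity verified as in Theorem \ref{vectorvalued}, and the Michael selection principle. Your write-up in fact makes explicit two points the paper compresses into ``by the same argument'': that stability gives openness of the map $(z_1,z_2)\mapsto(1-\lambda)z_1+\lambda z_2$ (hence a $\lambda$-analogue of Lemma \ref{sequences}, which is stated only for midpoints), and the interpolation with strictly interior points $q_i$ needed to handle boundary points $p_i$ of the closed balls.
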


Proposition \ref{co} turns Theorem 2.5 (b) of \cite{abralim2} into a complete characterization since weak and norm stability agree on finite dimensional spaces. Observe that the weak stability of the unit ball in $C_0(K,X)$ implies the weak stability of the unit ball in both $C_0(K)$ and $X$ \cite{abralim2}, since $C_0(K)$ and $X$ are $1$-complemented in $C_0(K,X)$. We don't know if the converse is true.

\section{Relationship with diameter two properties}

As Theorem \ref{predual} is a geometric characterization of purely atomic preduals of $L_1(\mu)$ spaces, one can get, as a consequence, geometric implications of such spaces in relation with other well known geometric properties, as diameter two properties (see \cite{jfa} and references there for background). In \cite{Hal+19} it is defined a very strong diameter 2 property called the symmetric strong diameter 2 property (SSD2P for short).

\begin{definition}
Let $X$ be a Banach space. We will say that $X$ has the  symmetric strong diameter 2 property (SSD2P for short) whenever for every finite family $\{S_i\}_{i=1}^n$ of slices of $B_X$ and $\ep>0$, there exist $x_i\in S_i$ and $y\in B_X$, independent of $i$, such that $x_i\pm y\in S_i$ for every $i\in\{1,\dots,n\}$ and $||y||>1-\ep$.
\end{definition}

In view of this property, we define the next property, which will be proved to be satisfied by Banach spaces with stable unit balls.

\begin{definition}
Let $X$ be a Banach space. We will say that $X$ has the  attaining symmetric strong diameter 2 property (ASSD2P for short) whenever for every finite family $\{S_i\}_{i=1}^n$ of slices of $B_X$, there exist $x_i\in S_i$ and $y\in S_X$, independent of $i$, such that $x_i\pm y\in S_i$ for every $i\in\{1,\dots,n\}$.
\end{definition}

Obviously, the ASSD2P implies the SSD2P which in particular implies the strong diameter 2 property (see \cite{Hal+19}). In order to obtain a characterization of the ASSD2P, we need the concept of symmetric convex combination of slices. Given $S_i$ slices for $i\in\{1,\dots,n\}$, it is defined the symmetric convex combination of those slices as a set of the form
$$\frac{\sum\limits_{i=1}^n\lambda_i S_i-\sum\limits_{i=1}^n\lambda_i S_i}{2}$$
where $\lambda_i>0$ with $\sum\limits_{i=1}^n\lambda_i=1$.
\begin{proposition}\label{ASSD2P}
Let $X$ be an infinite-dimensional Banach space. Then,
\begin{enumerate}
\item[i)] $X$ has the ASSD2P if, and only if, every intersection of symmetric convex combinations of slices reaches the sphere.
\item[ii)] $X$ has the SSD2P if, and only if, for every $\ep>0$ and every intersection of symmetric convex combinations of slices $C$ there exists $y\in C$ of norm $||y||>1-\ep$.
\end{enumerate}
\begin{proof}
i) Let $\{S_i\}_{i=1}^n$ be a finite family of Slices of the unit ball and consider the set
$$C=\bigcap\limits_{i=1}^n\frac{S_i-S_i}{2}.$$
If we assume that intersection of symmetric convex combinations of slices reaches the sphere then there is $y\in S_X\cap C$. Then, for every $i\in\{1,\dots,n\}$ there are two points $s_i,z_i\in S_i$ such that $y=\frac{s_i-z_i}{2}$. Considering $x_i=\frac{s_i+z_i}{2}$ we are done since $x_i-y=z_i\in S_i$ and $x_i+y=s_i\in S_i$. To prove the other implication, consider for every $i\in\{1,\dots,n\}$ a symmetric convex combination of slices $\frac{C_i-C_i}{2}$ where $C_i$ is an arbitrary convex combination of slices in $X$. By Proposition 2.1 of \cite{Hal+19} $a)\Leftrightarrow c)$, we may take for every $i\in\{1,\dots,n\}$ a point $x_i\in C_i$ and a point $y\in S_X$ such that $x_i\pm y\in C_i$ for every $i\in\{1,\dots,n\}$. We are now done since
$$y=\frac{(x_i+y)-(x_i-y)}{2}\in\frac{C_i-C_i}{2}\;\;\;\;\forall i\in\{1,\dots,n\}.$$
The assertion ii) of the Proposition is proved in the same way.
\end{proof}
\end{proposition}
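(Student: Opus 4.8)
The plan is to reduce both equivalences to a single elementary identity that translates the ``$x_i\pm y$'' formulation of the ASSD2P and SSD2P into a membership statement about symmetric differences. Concretely, for any \emph{convex} set $D$ and any vector $y$, one has $y\in\frac{D-D}{2}$ if and only if there is an $x\in D$ with $x\pm y\in D$: writing $y=\frac{u-v}{2}$ with $u,v\in D$ and setting $x=\frac{u+v}{2}\in D$ gives $x+y=u$ and $x-y=v$, while conversely $x\pm y\in D$ yields $y=\frac{(x+y)-(x-y)}{2}\in\frac{D-D}{2}$. Applied with $D$ a slice, or more generally a convex combination of slices, this identity is the bridge between the two languages, and I would isolate it first.

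For part i), the implication from the geometric condition to the ASSD2P is the easy half. Given slices $\{S_i\}_{i=1}^n$, I would form $C=\bigcap_{i=1}^n\frac{S_i-S_i}{2}$, regarding each $S_i$ as a trivial convex combination of slices, so that $C$ is an intersection of symmetric convex combinations of slices. By hypothesis $C$ reaches the sphere, so I pick $y\in S_X\cap C$; then $y\in\frac{S_i-S_i}{2}$ for every $i$, and the bridge produces $x_i\in S_i$ with $x_i\pm y\in S_i$. Since one common $y\in S_X$ serves all $i$, this is precisely the ASSD2P.

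The converse implication carries the genuine content and is the step I expect to be the main obstacle. Starting from an arbitrary intersection $\bigcap_{i=1}^n\frac{C_i-C_i}{2}$, with each $C_i$ a convex combination of slices, I must produce a common $y\in S_X$ lying in all of the symmetric differences. Since the ASSD2P is postulated only for slices, the essential move is the self-improvement from slices to convex combinations of slices recorded in \cite[Proposition 2.1]{Hal+19} (the equivalence $a)\Leftrightarrow c)$), which is where the infinite-dimensionality hypothesis effectively enters: it furnishes $x_i\in C_i$ and a single $y\in S_X$ with $x_i\pm y\in C_i$ for all $i$ simultaneously. The bridge identity then gives $y=\frac{(x_i+y)-(x_i-y)}{2}\in\frac{C_i-C_i}{2}$ for every $i$, hence $y\in S_X\cap\bigcap_i\frac{C_i-C_i}{2}$. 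Thus the crux is entirely delegated to the cited slices-to-convex-combinations equivalence, the surrounding argument being the formal identity.

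Finally, for part ii) I would run the identical scheme, replacing the requirement $y\in S_X$ throughout by the relaxed norm bound $\|y\|>1-\ep$ and quoting the corresponding $\ep$-version of the characterization in \cite{Hal+19}. No new idea is needed beyond tracking the norm of the common vector $y$ along the same two directions.
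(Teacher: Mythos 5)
Your proposal is correct and follows essentially the same route as the paper: the same bridge identity between $y\in\frac{D-D}{2}$ and the existence of $x\in D$ with $x\pm y\in D$, the same choice of $C=\bigcap_i\frac{S_i-S_i}{2}$ for the easy direction, and the same appeal to \cite[Proposition 2.1, $a)\Leftrightarrow c)$]{Hal+19} to pass from slices to convex combinations of slices in the converse. The only point worth noting is that this cited equivalence is stated for the SSD2P (norm $>1-\ep$) rather than the attaining version, so one must observe (as the paper implicitly does) that its proof keeps the same vector $y$, hence transfers verbatim to the $y\in S_X$ setting.
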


We thank an anonymous referee of an old version of this paper for asking to us about the possibility that weak stability of the unit ball for $L_1$ preduals implies ASSD2P. The next corollary answers positively this question and even more.

\begin{corollary} Banach spaces with a weak stable unit ball satisfy ASSD2P. \end{corollary}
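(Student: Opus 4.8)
The plan is to deduce the statement from the characterisation in Proposition~\ref{ASSD2P}~i), so that I only need to check the following for a (necessarily infinite-dimensional) Banach space $X$ with weak stable unit ball: every intersection of symmetric convex combinations of slices meets $S_X$. Accordingly, fix convex combinations of slices $C_1,\dots,C_m\subset B_X$, write $C_j=\sum_{i}\lambda_i^j S_i^j$ with the $S_i^j$ slices, and set
$$W=\bigcap_{j=1}^m\frac{C_j-C_j}{2}.$$
The whole task is to exhibit a point $y\in W\cap S_X$.

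The key point is that each symmetric convex combination $\frac{C_j-C_j}{2}$ is itself a convex combination of slices, and hence relatively weakly open in $B_X$. Indeed, the negative $-S$ of a slice $S$ is again a slice (its defining open half-space is the reflection through the origin of that of $S$), and a direct computation gives
$$\frac{C_j-C_j}{2}=\sum_i\frac{\lambda_i^j}{2}\,S_i^j+\sum_i\frac{\lambda_i^j}{2}\,(-S_i^j),$$
a convex combination of the slices $S_i^j$ and $-S_i^j$, the coefficients adding up to $1$. Since slices are relatively weakly open subsets of $B_X$, weak stability applies directly and shows that $\frac{C_j-C_j}{2}$ is relatively weakly open in $B_X$. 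Therefore $W$, being a finite intersection of relatively weakly open sets, is relatively weakly open in $B_X$; moreover $0\in W$, because $0=\frac{c-c}{2}\in\frac{C_j-C_j}{2}$ for any choice of $c\in C_j$ and every $j$.

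It remains to find a unit vector in $W$, and here infinite-dimensionality does all the work. As $W$ is a relatively weakly open neighbourhood of $0$ in $B_X$, there are $f_1,\dots,f_N\in X^*$ and $r>0$ with $\{z\in B_X:\abs{f_i(z)}<r,\ 1\le i\le N\}\subset W$. The subspace $\bigcap_{i=1}^N\ker f_i$ has finite codimension in the infinite-dimensional space $X$, so it is infinite-dimensional and contains some $y\in S_X$. Then $\abs{f_i(y)}=0<r$ for every $i$, whence $y\in W\cap S_X$, and Proposition~\ref{ASSD2P}~i) yields the ASSD2P. I do not expect a genuine obstacle here; the only steps requiring care are the verification that the negative of a slice is a slice and that the rewriting above really exhibits $\frac{C_j-C_j}{2}$ as a convex combination of slices, so that weak stability is legitimately invoked rather than merely for the original families $\{S_i^j\}$.
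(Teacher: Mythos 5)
Your proof is correct and is essentially the paper's own argument: the corollary is stated immediately after Proposition \ref{ASSD2P} with no separate proof, and the intended justification is exactly what you spelled out --- each symmetric convex combination $\frac{C-C}{2}$ is itself a convex combination of the slices $S_i$ and $-S_i$ (the negative of a slice being a slice), hence relatively weakly open by weak stability; a finite intersection of such sets is then a relatively weakly open neighbourhood of $0$ in $B_X$, and the kernel argument produces a point of $S_X$ inside it, so Proposition \ref{ASSD2P}~i) applies.

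One caveat: your parenthetical claim that such an $X$ is \emph{necessarily} infinite-dimensional is false. In a finite-dimensional space the weak and norm topologies coincide, so for instance $\mathbb{R}$, or any finite-dimensional space with a (norm) stable unit ball, has a weak stable unit ball; yet such spaces fail the ASSD2P, since that property implies the strong diameter two property, which no finite-dimensional space has. Thus infinite-dimensionality is not a consequence of weak stability but an implicit hypothesis, inherited from the statement of Proposition \ref{ASSD2P} (and left tacit by the paper in the corollary itself). With that hypothesis made explicit, your argument is complete.
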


The question now is wether the reverse result is true or not. For that purpose, we are going to give an example of a space with the ASSD2P whose unit ball is not weak stable. $C[0,1]$ does not have a weak stable unit ball as it is a non purely atomic $L_1$-predual. Let us prove that $C[0,1]$ is the example that we are looking for.
\begin{proposition}\label{notASS}
$C[0,1]$ enjoy the ASSD2P.
\begin{proof}
Let $\ep\in(0,1)$ and $\mu_1,\dots,\mu_n\in S_{\mathcal{M}[0,1]}$, so that we consider the slices $S_i$ given by
$$S_i=\{f\in B_{C[0,1]}\;:\;|\mu_i(f)|>1-2\ep\} \;\;\forall i\in\{1,\dots,n\}$$
and we take for every $i\in\{1,\dots,n\}$ an element $f_i\in S_i$ such that $|\mu_i(f_i)|>1-\ep$.
First, we take $r\in(0,1)$ and $\delta>0$ such that
$$\mu_i(r-\delta,r+\delta)<\frac \ep2 \;\;\;\forall i\in\{1,\dots,n\}.$$ Let $g:[0,1]\to[0,1]$ be a continuous function such that $g(t)=1$ for every $t\in[0,1]\setminus(r-\delta,r+\delta)$ and $g(t)=0$ for every $t\in[r-\delta/2,r+\delta/2]$. If we define $g_i=f_ig$ it is straightforward that $g_i\in B_{C[0,1]}$ is such that
$$\begin{aligned}|\mu_i(g_i-f_i)|&=\bigg| \int_{[0,1]}g_i-f_i\,d\mu_i \bigg|\\&\le\int_{(r-\delta,r+\delta)}|g_i-f_i|\,d\mu_i+\int_{[0,1]\setminus(r-\delta,r+\delta)}|g_i-f_i|\,d\mu_i<\ep,\end{aligned}$$
so $g_i\in S_i$. Now, it is enough to consider a continuous function $h:[0,1]\to[0,1]$ such that $h(t)=0$ for $t\in[0,1]\setminus(r-\delta/2,r+\delta/2)$ and $g(r)=1$. It is easy to check that $g_i+h,g_i-h\in S_i$ as $g_i$ and $h$ have disjoint support, where $h\in S_{C[0,1]}$.
\end{proof}
\end{proposition}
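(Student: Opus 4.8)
The plan is to reduce the ASSD2P for $C[0,1]$ to producing a single unit-norm ``bump'' that can be both added to and subtracted from suitable representatives of the given slices without leaving them. Recall that a slice of $B_{C[0,1]}$ is determined by a norm-one functional, i.e.\ by a measure $\mu_i\in S_{\mathcal{M}[0,1]}$ together with a width; so, given finitely many slices $S_1,\dots,S_n$, I would first fix measures $\mu_i$ representing them and choose $f_i\in S_i$ that nearly attain the norm at $\mu_i$, so that the defining inequality of $S_i$ holds for $f_i$ with room to spare.

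The heart of the argument is a localization step. Since each $|\mu_i|$ is a finite measure (of total mass $\le 1$), the finite collection $\{\mu_i\}_{i=1}^n$ has at most countably many atoms, so I can pick $r\in(0,1)$ that is not an atom of any $|\mu_i|$, and then, by continuity of measure from above, shrink $\delta>0$ so that $|\mu_i|\big((r-\delta,r+\delta)\big)<\ep/2$ simultaneously for all $i$. This single low-mass interval is exactly what lets one universal perturbation $y$ serve every slice at once.

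Next I would clear room on this interval and attach the bump on disjoint support. Multiplying $f_i$ by a continuous $g\colon[0,1]\to[0,1]$ that equals $1$ off $(r-\delta,r+\delta)$ and vanishes on $[r-\delta/2,r+\delta/2]$ yields $g_i:=f_ig\in B_{C[0,1]}$, which still lies in $S_i$ because $g_i$ and $f_i$ differ only on the low-mass interval, so $\abs{\mu_i(g_i-f_i)}<\ep$. Taking $x_i:=g_i$ and choosing $y:=h$ a continuous bump supported in $(r-\delta/2,r+\delta/2)$ with $\norm{h}=1$ and peak value $1$, the functions $g_i$ and $h$ have disjoint supports; hence $\norm{g_i\pm h}\le 1$ for all $i$ at once, while $\abs{\mu_i(h)}\le |\mu_i|\big((r-\delta,r+\delta)\big)<\ep/2$ forces $g_i\pm h\in S_i$. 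Thus $x_i\pm y\in S_i$ for every $i$ with the common $y=h\in S_{C[0,1]}$.

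The only genuinely delicate point is the simultaneity: a priori the $\mu_i$ could be spread out very differently, so the main obstacle is guaranteeing one interval on which all of them are nearly massless. I expect this to be dealt with precisely by the countability-of-atoms argument above, which is the step I would write most carefully; the disjoint-support norm and measure estimates that close the proof are then routine.
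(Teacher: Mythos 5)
Your proposal is correct and follows essentially the same route as the paper's proof: localize at a point $r$ where all the measures $|\mu_i|$ have small mass, damp the near-attaining functions $f_i$ by a cutoff $g$ vanishing near $r$, and then add and subtract a disjointly supported unit bump $h$. You even supply the one detail the paper leaves implicit (that such $r$ and $\delta$ exist because the finitely many measures have only countably many atoms); just note that the working estimate is really $|\mu_i(g_i-f_i)|\le|\mu_i|\big((r-\delta,r+\delta)\big)<\ep/2$ rather than merely $<\ep$, since that sharper bound is what makes the final inequality $|\mu_i(g_i\pm h)|>1-2\ep$ close.
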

Thanks to Proposition \ref{notASS} we have  that
$$w\text{-stable }\nLeftarrow\text{ ASSD2P}.$$

Now, let us prove that the ASSD2P is strictly stronger than the SSD2P.\\

From \cite{HWW93} page 168 we know there exists a Banach space $X$ which is both Almost Square (ASQ) and strictly convex. This means that $X$ has the Symmetric Strong Diameter 2 Property (SSD2P) as it is an Almost Square Banach space \cite{Hal+19}. Since the ASSD2P implies the Attaining Strong Diameter 2 Property (see Theorem 3.4 of \cite{lo}), which is incompatible with strictly convex spaces (Proposition 2.3 of \cite{lo}). It follows that $X$ does not have the ASSD2P, meaning that
$$\text{ASSD2P }\nLeftarrow\text{ SSD2P.}$$
As purely atomic $L_1$ preduals satisfy ASSD2P, it is natural wondering what is the class of $L_1$ preduals with ASSD2P.

We have shown that stability arises a lot of geometric consequences related with diameter 2 properties. Finally, let us now stablish the next Proposition \ref{extremes} focusing on some geometric consequences relating extreme points.

\begin{proposition}\label{extremes}
Let $X$ be a Banach space with a weak stable unit ball. Then:
\begin{enumerate}
\item[i)] Every face of $B_X$ has a weakly closed set of extreme points and so $Ext(B_X)$ is weakly closed.
\item[ii)] $Ext(B_X)$ is nowhere weakly dense in $B_X$, that is, for every $U\subset B_X$ nonempty relatively weakly open subset, there exists another $V\subset U$ nonempty relatively weakly open subset such that $V\cap Ext(B_X)=\emptyset$.
\end{enumerate}
\begin{proof}
From \cite[Proposition 1.1]{Papa}, we know that the multivalued map $T:C\rightarrow C$ which gives for every $x\in C$ the face generated by $x$, that is 
$$T(x)=\{y\in C\;:\; \exists z\in C,\;\exists \lambda\in(0,1)\;\;\text{with}\;\;x=\lambda y+(1-\lambda)z\}$$
is l.s.c. when $C$ is a stable convex set. In particular, it says that $Ext(C)$ is closed when $C$ is stable.

The first assertion is due to that and the fact that every face of a stable convex set is again stable. In fact, if $C$ is a stable convex set and $F$ is one of its faces, then, taking $U_1,U_2\subset C$ relatively open sets and making use of the definition of face, we have that
$$\frac{U_1\cap F+U_2\cap F}{2}=\left(\frac{U_1+U_2}{2}\right)\cap F,$$
so $F$ is stable.

For the second assertion, let us consider $U$ a nonempty relatively weakly open subset of $B_X$. Then by Bourgain's Lemma \cite[Lemma II.1]{ggms} we know there is a nontrivial convex combination of disjoint slices $V$ inside $U$. As slices are relatively weakly open subsets of $B_X$ and $B_X$ is weak stable, we know that $V$ is a nonempty relatively weakly open. It is straightforward to see that $V$ cannot have any extreme point of $B_X$, as it is a convex combination of more than one nonempty disjoint subsets of $B_X$.
\end{proof}
\end{proposition}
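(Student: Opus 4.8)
The plan is to derive both statements from the weak stability of $B_X$, using the behaviour of the face-generating map and Bourgain's lemma respectively. For part i), I would first isolate the general principle that a stable convex set has a (relatively) closed set of extreme points. The input is the result of \cite[Proposition~1.1]{Papa}: for a stable convex set $C$ the multivalued map $T$ sending each $x\in C$ to the face it generates is l.s.c. From this one reads off that $\operatorname{Ext}(C)$ is closed. Indeed, if $x_\alpha\to x$ with every $x_\alpha$ extreme and $x$ failed to be extreme, then $x=\tfrac{y+z}{2}$ for some $y\neq z$ in $C$, so some point $y\in T(x)$ satisfies $y\neq x$; applying Lemma~\ref{redes}(ii) to the convergence $x_\alpha\to x$ and to $y\in T(x)$ produces a subnet and points $y_\lambda\in T(x_{\alpha_\lambda})$ with $y_\lambda\to y$. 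But extremality of $x_{\alpha_\lambda}$ forces $T(x_{\alpha_\lambda})=\{x_{\alpha_\lambda}\}$, whence $y_\lambda=x_{\alpha_\lambda}\to x$ and therefore $y=x$, a contradiction.

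Next I would show that every face $F$ of $B_X$ is again weakly stable, so that the principle above applies to $F$ as well as to $C=B_X$ itself (which is a face of itself). The crucial observation is the set identity
$$\frac{(U_1\cap F)+(U_2\cap F)}{2}=\left(\frac{U_1+U_2}{2}\right)\cap F$$
valid for all relatively weakly open $U_1,U_2\subseteq B_X$: the inclusion $\subseteq$ uses only convexity of $F$, while $\supseteq$ is precisely the defining property of a face applied to the midpoint $\tfrac{u_1+u_2}{2}\in F$. Since every relatively weakly open subset of $F$ is of the form $U\cap F$, and since $\tfrac{U_1+U_2}{2}$ is weakly open in $B_X$ by stability, the right-hand side is weakly open in $F$; hence $F$ is stable, and the first paragraph gives that $\operatorname{Ext}(F)$ is weakly closed.

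For part ii), given a nonempty relatively weakly open $U\subseteq B_X$, the plan is to build inside $U$ a small weakly open set that avoids extreme points entirely. I would invoke Bourgain's lemma \cite[Lemma~II.1]{ggms} to produce inside $U$ a nontrivial convex combination $V=\sum_{i=1}^n\lambda_i S_i$ of pairwise disjoint slices, with $n\geq 2$ and $\lambda_i>0$. Each slice is relatively weakly open, so by weak stability $V$ is relatively weakly open; it is nonempty and contained in $U$. Finally, no $v\in V$ can be extreme: writing $v=\sum_i\lambda_i s_i$ with $s_i\in S_i$, extremality of $v$ would force $s_i=v$ for every $i$, placing $v$ in all the $S_i$ at once and contradicting their disjointness (as $n\geq 2$). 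Thus $V\cap\operatorname{Ext}(B_X)=\emptyset$, as required.

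The main obstacle I anticipate lies in part i): one must keep careful track that the ambient topology throughout is the inherited weak topology, both in the l.s.c. statement for $T$ and in the inheritance of stability by faces, and the closedness-from-l.s.c. argument must be run with nets (not sequences) since $B_X$ need not be weakly first countable, which is exactly why Lemma~\ref{redes} is the right tool. The clean set identity is what makes ``faces of stable sets are stable'' go through painlessly. Part ii) is comparatively routine once Bourgain's lemma is granted, the only delicate point being that it is the \emph{disjointness} of the slices, rather than mere nontriviality of the combination, that rules out extreme points.
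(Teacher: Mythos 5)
Your proposal is correct and follows essentially the same route as the paper's own proof: Papadopoulou's result on lower semicontinuity of the face map plus the set identity $\frac{(U_1\cap F)+(U_2\cap F)}{2}=\bigl(\frac{U_1+U_2}{2}\bigr)\cap F$ for part i), and Bourgain's lemma with the disjointness-of-slices argument for part ii). The only difference is that you spell out, via the net criterion of Lemma~\ref{redes}, why lower semicontinuity of the face map forces $\operatorname{Ext}(C)$ to be closed, a step the paper simply attributes to \cite[Proposition~1.1]{Papa}; this added detail is correct and harmless.
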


%CAMBIAR LAS PREGUNTAS

{\it \bf Acknowledgements.-} We want to thank Miguel Mart\'{\i}n and Abraham Rueda for their help looking for some adequate reference. 
 
\bigskip

\end{document}